\newtheorem{thm}{Theorem}[section]
\newtheorem{prop}[thm]{Proposition}
\newtheorem{lem}[thm]{Lemma}
\newtheorem*{asm*}{Assumptions}
\newtheorem{asm}{Assumption}
\theoremstyle{remark}
\newtheorem{rem}[thm]{Remark}
\newtheorem*{rem*}{Remark}
\theoremstyle{definition}
\newcommand{\ra}{\rightarrow}
\newcommand{\R}{\mathbb R}     
\newcommand{\Z}{\mathbb Z}     
\renewcommand{\a}{\alpha}
\renewcommand{\d}{\delta}
\newcommand{\e}{\varepsilon}
\newcommand{\s}{\sigma}
\renewcommand{\th}{\theta}
\renewcommand{\epsilon}{\varepsilon}
\renewcommand{\k}{\kappa}
\newcommand{\bigo}{\mathcal{O}}
\newcommand{\fl}[1]{\lfloor #1 \rfloor}  
\DeclarePairedDelimiter{\ceil}{\lceil}{\rceil}
\newcommand{\w}{\omega}              
\renewcommand{\P}{\mathbb{P}}        
\newcommand{\E}{\mathbb{E}}          
\newcommand{\vp}{\mathrm{v}_0}       
\DeclareMathOperator{\Var}{Var}
\DeclareMathOperator{\Cov}{Cov}
\title[Quenched CLT rates of convergence]{Quenched central limit theorem rates of convergence for one-dimensional random walks in random environments}
\author{Sung Won Ahn}
\address{Sung Won Ahn\\Roosevelt University\\Department of Mathematics and Actuarial Science\\430 S. Michigan Ave.\\Chicago, IL 60605\\USA} 
\email{sahn02@roosevelt.edu}
\author{Jonathon Peterson}
\address{Jonathon Peterson\\Purdue University\\Department of Mathematics\\150 N University Street\\West Lafayette, IN  47907\\USA}
\email{peterson@purdue.edu}
\urladdr{http://www.math.purdue.edu/~peterson}
\thanks{J. Peterson was partially supported by NSA grant H98230-15-1-0049.}
\subjclass[2010]{Primary 60K37; Secondary 60F05}
\keywords{quenched central limit theorem, rates of convergence}
\begin{document}

\begin{abstract}
Unlike classical simple random walks, one-dimensional random walks in random environments (RWRE) are known to have a wide array of potential limiting distributions. Under certain assumptions, however, it is known that CLT-like limiting distributions hold for the walk under both the quenched and averaged measures. 
We give upper bounds on the rates of convergence for the quenched central limit theorems for both the hitting time and position of the RWRE with polynomial rates of convergence that depend on the distribution on environments. 
\end{abstract}

\maketitle

\section{Introduction}

If $\{\xi_k\}_{k\geq 1}$ is an i.i.d.\ sequence of zero mean random variables with finite variance $\s^2 = E[\xi_1^2]$, then the central limit theorem implies that the rescaled sum $Z_n = \frac{1}{\s\sqrt{n}} \sum_{k=1}^n \xi_k$ converges in distribution to a standard Gaussian random variable. That is, $F_n(x) = P(Z_n \leq x) \to \Phi(x)$ where $\Phi$ is the c.d.f.\ of the standard normal distribution. The central limit theorem, however, offers no quantitative bounds on the rate of convergence of $F_n$ to $\Phi$ and in fact additional moment assumptions are needed to obtain such rates of convergence. 
The classical Berry-Esseen Theorem \cite{bBET,eBET} states that there is a universal constant $A_1<\infty$ such that if $\xi_1$ has finite third moment then 
\[
 \| F_n - \Phi\|_\infty = \sup_{x\in\R} \| F_n(x) - \Phi(x) \| \leq \frac{A_1 E[|\xi_1|^3]}{\s^3\sqrt{n}}, \quad \forall n\geq 1. 
\]
More generally, one can obtain slower rates of convergence under weaker moment assumptions. In particular, it follows from \cite{kBET} that for any $\d \in (0,1]$ there exists a universal constant $A_\d <\infty$ such that if $\xi_k$ has finite $(2+\d)$-th moment then
\[
 \| F_n - \Phi\|_\infty \leq \frac{A_\d E[|\xi_1|^{2+\d}]}{\s^{2+\d} n^{\d/2}}, \quad \forall n\geq 1. 
\]

In this paper we will be concerned with obtaining Berry-Esseen like rates of convergence for central limit theorems arising in one-dimensional random walks in random environments. A random walk in a random environment (RWRE) is a simple model for random motion in a non-homogeneous environment. The class of models that may be considered RWRE is quite large, but we will be concerned here with the case of (nearest-neighbor) one-dimensional RWRE. In this model, a \emph{random environment} is a random sequence $\w = \{\w_x \}_{x\in \Z} \in [0,1]^\Z$ which can be used to determine the transition probabilities for a Markov chain on $\Z$ with steps of size $\pm 1$. In particular, given an environment $\w$ and a starting point $x \in \Z$ we will denote by $P_\w^x$ the law of a Markov chain $\{X_n\}_{n\geq 0}$ defined by $P_\w^x(X_0 = x) = 1$ and 
\[
 P_\w^x\left(X_{n+1} = y+1 \, | \, X_n=y \right) 
=  1 - P_\w^x\left(X_{n+1} = y-1 \, | \, X_n=y \right) = \w_y. 
\]
The distribution $P_\w^x$ of  the walk in a fixed environment is called the \emph{quenched} law of the RWRE. 
If $P$ denotes the probability distribution of the environment $\w$, then by averaging the quenched $P_\w^x$ law with respect to $P$ we obtain the \emph{averaged} (or \emph{annealed}) law of the RWRE:
\[
 \P^x(\cdot) = E\left[ P_\w^x(\cdot) \right]. 
\]
Expectations with respect to the quenched and averaged laws of the walk are denoted by $E_\w^x$ and $\E^x$, respectively. 
Usually the walk will be started at $X_0=0$ and we will use $P_\w$ and $\P$ to denote the quenched and averaged laws in this case and corresponding expectations by $E_\w$ and $\E$, respectively.
Finally, variances under the quenched measure $P_\w$ will be denoted by $\Var_\w$; that is $\Var_\w(Z) = E_\w[Z^2] - E_\w[Z]^2$.  

While RWREs are a rather simple generalization of classical simple random walks, the behaviors of RWREs can be quite different than what is known for simple random walks. 
For instance, if the distribution on environments is such that the walk is recurrent then (under rather tame additional assumptions)
the position of the walk converges in distribution to a non-Gaussian distribution when scaled by $(\log n)^2$ rather than the diffusive $\sqrt{n}$ scaling in classical simple random walks \cite{sRecurrent}.
Transient RWREs can also exhibit a variety of non-Gaussian limiting distributions under non-diffusive scalings \cite{kksStable,mrzStable}, but in this paper we will be assuming conditions under which it is known that CLT-like limiting distributions hold. 

The first assumption that we will be making in this paper is that the environments are i.i.d.
\begin{asm}\label{asm:iid}
 The distribution $P$ on environments is such that $\w = \{\w_x\}_{x\in \Z}$ is i.i.d. 
\end{asm}
\noindent
For our second main assumption we will need to introduce some additional notation. First, let 
\[
 \rho_x = \frac{1-\w_x}{\w_x}, \quad x \in \Z. 
\]
Many of the known results for RWREs can be stated in terms of the distribution of this ratio of transition probabilities. For instance, under Assumption \ref{asm:iid} the RWRE is transient to the right if $E[\log \rho_0] < 0$ and the limiting speed $\vp = \lim_{n\to\infty} X_n/n$ is positive if and only if $E[\rho_0] < 1$ \cite{sRWRE}.
In this paper we will be making the following assumption regarding the moments of the random variable $\rho_0$. 
\begin{asm}\label{asm:k2}
 $\k := \sup\{ p > 0: E[\rho_0^p] < 1 \} > 2$ (or equivalently $E[\rho_0^{2+\d}] < 1$ for some $\d>0$). 
\end{asm}
Since $t \mapsto E[\rho_0^t] = E[e^{t \log \rho_0}]$ is the moment generating function of $\log \rho_0$ and is therefore a convex function in $t$, it follows from Assumption \ref{asm:k2} that $E[\log \rho_0] < 0$ (that is the walk is transient to the right) and that 
\begin{equation}\label{rpdef}
 r_p := E[\rho_0^p] < 1 \quad \text{for all } p \in (0,\k). 
\end{equation}
In particular, this implies that $r_1 = E[\rho_0] < 1$ so that the speed $\vp$ of the walk is positive. 

It should be noted that under rather mild additional assumptions it holds that
\begin{equation}\label{oldkdef}
E[\rho_0^\kappa] = 1.
\end{equation}
In fact, in a number of previous results in RWRE the parameter $\k$ is defined as the unique positive solution to equation \eqref{oldkdef}. 
For instance, the parameter $\k$ defined this way is used in studying limiting distributions of transient RWRE \cite{kksStable,pzSL1,p1lsl2,psWQLTn,psWQLXn,estzWQL,dgWQL}, identifying the subexponential rate of decay of certain large deviation probabilities \cite{dpzTE1D,gzQSETE,apOQSA}, and identifying the maximal displacement of large ``bridges'' of RWRE \cite{gpRWREBridges}. A number of these results assume additional technical conditions (e.g., $E[\rho_0^\kappa \log \rho_0] < \infty$ and the distribution of $\log \rho_0$ is non-lattice) to obtain certain precise tail asymptotics, but we will not need these conditions nor the slightly more restrictive definition of $\k$ in \eqref{oldkdef}.

The relevance of the parameter $\k$ to the limiting distributions of transient RWRE comes from the fact that $\k$ determines what moments of the hitting times of the RWRE are finite (c.f. Lemma \ref{lem:taumtail} below); in particular, hitting times have finite second moment if $\k>2$. 
The limiting distributions under the averaged measure $\P$ for transient RWRE in \cite{kksStable} show that CLT-like limiting distributions hold only when $\k > 2$. In particular, when $\k\in(0,2)$ the limiting distributions are non-Gaussian with non-diffusive scaling and when $\k = 2$ the limiting distribution is Gaussian but with scaling $\sqrt{n \log n}$. 
However, when $\k>2$ we have the following CLT for both the position $X_n$ of the walk and the hitting times 
\[
 T_n= \inf\{k\geq 0: X_k = n \}, \qquad n\in \Z. 
\]
\begin{thm}[\cite{kksStable,zRWRE}]\label{th:aclt}
 If Assumptions \ref{asm:iid} and \ref{asm:k2} hold, then 
\[
 \lim_{n\to\infty} \P\left( \frac{T_n-\frac{n}{\vp}}{\s_0 \sqrt{n}} \leq x \right) 
= \lim_{n\to\infty} \P\left( \frac{X_n-n \vp}{\s_0 \vp^{3/2} \sqrt{n}} \leq x \right)
= \Phi(x), \quad \forall x \in \Z, 
\] 
where 
\[
 \s_0^2 = E[\Var_\w(T_1)] + \Var(E_\w[T_1]) + 2 \sum_{k=1}^\infty \Cov( E_\w[T_1], E_\w^k[T_{k+1}]) < \infty. 
\]
\end{thm}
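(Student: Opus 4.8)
The averaged CLT for the hitting times $T_n$ follows the classical strategy of decomposing $T_n$ into a sum of independent increments and applying the CLT for \iid random variables, while simultaneously controlling the dependence on the environment. The plan is to write
\[
 T_n = T_0 + \sum_{k=0}^{n-1} \left( T_{k+1} - T_k \right),
\]
where $T_{k+1} - T_k$ is the time for the walk to first reach $k+1$ after first reaching $k$. Under the averaged measure $\P$ the increments $\tau_k := T_{k+1} - T_k$ are \emph{not} i.i.d.\ (the walk may backtrack across level $k$ before moving on), so the first step is to pass to a sequence of genuinely \iid blocks. The standard device is to introduce the ladder/regeneration structure: let $\nu_1 < \nu_2 < \cdots$ be the successive regeneration levels (sites that the walk reaches and never backtracks below), and note that under Assumption~\ref{asm:iid} the blocks of the path between consecutive regeneration levels are \iid under $\P$, with the time spent in each block having finite second moment precisely because $\k > 2$ (this is where Lemma~\ref{lem:taumtail} enters). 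Applying the classical CLT to the \iid sum of block times, together with a renewal argument to count the number of regeneration blocks before level $n$, yields a Gaussian limit for $T_n$ with some variance $\widetilde\s^2$; a separate computation identifies $\widetilde\s^2$ with the stationary formula
\[
 \s_0^2 = E[\Var_\w(T_1)] + \Var(E_\w[T_1]) + 2\sum_{k=1}^\infty \Cov\bigl(E_\w[T_1], E_\w^k[T_{k+1}]\bigr)
\]
by recognizing $\s_0^2$ as the long-run variance per step of the stationary (under the environment viewed from the particle) sequence of expected crossing times and quenched variances.

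The finiteness $\s_0^2 < \infty$ is itself part of the statement and must be checked: the term $E[\Var_\w(T_1)]$ and $\Var(E_\w[T_1])$ are finite because $E_\w[T_1]$ and $E_\w[T_1^2]$ have finite expectation under Assumption~\ref{asm:k2} (again via the moment bounds on $\rho_0$, since $E_\w[T_1]$ is a convergent series in the $\rho_x$'s with $x \le 0$), and the covariance series converges because $\Cov(E_\w[T_1], E_\w^k[T_{k+1}])$ decays geometrically in $k$ — the dependence between the environment near $0$ and the environment near $k$ enters only through backtracking probabilities, which are exponentially small in $k$ under Assumption~\ref{asm:k2}. This geometric decay is exactly what makes the "stationary + mixing" CLT applicable and is the analytic heart of identifying the constant.

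To transfer the CLT for $T_n$ to a CLT for the position $X_n$, I would use the standard inversion relation between hitting times and positions: on the event that $X_n$ is near $n\vp$, the statements $\{X_n \ge m\}$ and $\{T_m \le n\}$ essentially coincide, so centering $T_m$ at $m/\vp$ and setting $m = \lfloor n\vp + x \s_0 \vp^{3/2}\sqrt{n}\rfloor$ converts the Gaussian fluctuations of $T_m$ of order $\s_0\sqrt{m} \approx \s_0 \sqrt{n\vp}$ into fluctuations of $X_n$ of order $\s_0 \vp^{3/2}\sqrt{n}$ after dividing by $1/\vp$; the scaling factors $\vp$ and $\vp^{3/2}$ come out of this change of variables and the relation $T_m \approx m/\vp$. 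One also needs that $\max_{k \le n}|X_{k+1}-X_k|$-type oscillations and the overshoot $X_{T_m} - m$ are negligible at scale $\sqrt{n}$, which again follows from the finiteness of second moments of crossing times.

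The main obstacle is the identification of the limiting variance as exactly $\s_0^2$ and the proof that the covariance series converges — i.e., establishing the geometric decay of $\Cov(E_\w[T_1], E_\w^k[T_{k+1}])$ and matching the regeneration-block variance to the stationary long-run variance. The CLT machinery itself (regeneration decomposition, classical i.i.d.\ CLT, renewal counting) is by now routine for one-dimensional RWRE, but pinning down the constant requires carefully relating the annealed block structure to the environment-stationary process. Since this is a known theorem (\cite{kksStable,zRWRE}), for the purposes of this paper I would simply cite those references for the statement and use it as an input; the contribution of the present paper is the \emph{rate} of convergence, and the proof of Theorem~\ref{th:aclt} itself is not reproduced here.
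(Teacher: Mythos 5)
The paper does not prove Theorem~\ref{th:aclt}; it is cited from \cite{kksStable,zRWRE} and used as known background, which you correctly acknowledge at the end of your write-up. However, the route you sketch (regeneration/ladder-epoch decomposition plus an annealed \iid CLT for block times) is not the one the paper points to, and it is worth contrasting the two. The paper's own discussion, immediately after Theorem~\ref{th:qclt}, indicates the cleaner decomposition
\[
\frac{T_n - n/\vp}{\sqrt{n}} \;=\; \frac{T_n - E_\w[T_n]}{\sqrt{n}} \;+\; \frac{E_\w[T_n] - n/\vp}{\sqrt{n}},
\]
where the first term is handled by the quenched CLT (Theorem~\ref{th:qclt}), contributing variance $\s^2 = E[\Var_\w(T_1)]$, and the second term is a CLT for the stationary ergodic sequence $\mu_k = E_{\th^k\w}[\tau_1] = E_\w^k[T_{k+1}] - k/\vp$ type quantity, contributing the long-run variance $\Var(\mu_0) + 2\sum_{k\ge1}\Cov(\mu_0,\mu_k)$. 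The two terms are asymptotically independent because, conditionally on $\w$, the first has mean zero while the second is $\w$-measurable; the variances therefore simply add, which is exactly how the displayed formula for $\s_0^2$ arises with no extra matching computation. Your regeneration approach is a legitimate alternative and is closer in spirit to \cite{kksStable}, but it requires an additional, nontrivial step to identify the regeneration-block variance with the stationary formula $\s_0^2$, a step you flag but do not carry out. One further imprecision: you attribute the geometric decay of $\Cov(E_\w[T_1], E_\w^k[T_{k+1}])$ to backtracking probabilities of the walk; the cleaner explanation is that $\mu_k$ has the explicit expansion $\mu_k = 1 + 2\sum_{j\geq1}\prod_{i=0}^{j-1}\rho_{k-i}$, so its dependence on the environment near $0$ decays through products of $\rho$'s whose $p$-th moment is $r_p^j < 1$, giving the geometric decay directly from Assumption~\ref{asm:k2} without reference to the walk.
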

Theorem \ref{th:aclt} gives CLTs for the RWRE under the averaged measure. However, in this paper we will be primarily interested with CLTs under the quenched measure. 
\begin{thm}[\cite{aRWRE,gQCLT,pThesis}]\label{th:qclt}
 If Assumptions \ref{asm:iid} and \ref{asm:k2} hold, then
\[
 \lim_{n\to\infty} P_\w \left( \frac{T_n-E_\w[T_n]}{\s \sqrt{n}} \leq x \right) 
= \lim_{n\to\infty} P_\w \left( \frac{X_n-n \vp+Z_n(\w)}{\s \vp^{3/2} \sqrt{n}} \leq x \right)
= \Phi(x), \quad P\text{-a.s.} \quad \forall x \in \Z, 
\] 
where 
\[
 \s^2 = E[\Var_\w(T_1)] < \infty \quad \text{and}\quad Z_n(\w) = \vp\left( E_\w[T_{\fl{n\vp}}] - \frac{\fl{n \vp}}{\vp} \right). 
\]
\end{thm}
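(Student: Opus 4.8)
The theorem to be proved (Theorem~\ref{th:qclt}, the quenched CLT) is cited from the literature, so here I will present the proof strategy I would use to establish it --- the same strategy underlies the quantitative rates of convergence that are the main contribution of the paper, with the quantitative bounds being the harder refinement.

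\textbf{Reduction to the hitting-time statement.} First I would observe that the two CLTs (for $T_n$ and for $X_n$) are essentially equivalent via the standard inversion relation between $X_n$ and $T_n$: on the event $\{T_n \le k < T_{n+1}\}$ one has $X_k = n$, so $\{X_k < n\}$ is comparable to $\{T_n > k\}$, and after rescaling the correction term $Z_n(\w)$ and the factor $\vp^{3/2}$ are exactly what is needed to match the two normalizations. Concretely, $X_k \ge n \iff T_n \le k$, and writing $n = \vp k + \text{(error)}$ and Taylor-expanding $E_\w[T_n]$ around $n=\vp k$ produces the centering $n\vp - Z_n(\w)$ and the variance scaling $\s^2 \vp^3 k$. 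I would carry this out carefully once, reducing everything to the claim $P_\w(\frac{T_n - E_\w[T_n]}{\s\sqrt n} \le x) \to \Phi(x)$ $P$-a.s.; the position CLT then follows, modulo controlling the fluctuations of $X_k$ over the random time interval $[T_n, T_{n+1})$, which is negligible since $T_{n+1}-T_n$ is tight.

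\textbf{Decomposition of $T_n$ into i.i.d.-like blocks.} The core of the argument is that under $P_\w$ the hitting time decomposes as $T_n = \sum_{i=1}^n \tau_i$ where $\tau_i = T_i - T_{i-1}$ is the time to cross from $i-1$ to $i$. These increments are \emph{independent} under $P_\w$ (by the Markov property), but not identically distributed, since they depend on the environment to the left of site $i$. So under the quenched measure we are in the setting of a Lindeberg-type CLT for independent, non-identically-distributed summands: I would center by $E_\w[T_n] = \sum_i E_\w[\tau_i]$ and note $\Var_\w(T_n) = \sum_i \Var_\w(\tau_i)$. The first task is to show $\frac{1}{n}\Var_\w(T_n) \to \s^2 = E[\Var_\w(T_1)]$ for $P$-a.e.\ $\w$; this is a law of large numbers for the stationary ergodic sequence $\{\Var_\w(\tau_i)\}_i$, which requires $E[\Var_\w(\tau_1)] < \infty$ --- and this is exactly where Assumption~\ref{asm:k2} ($\k>2$, hence finite second moments of hitting times, Lemma~\ref{lem:taumtail}) enters. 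The second task is to verify the Lindeberg condition $\frac{1}{\s^2 n}\sum_{i=1}^n E_\w[(\tau_i - E_\w[\tau_i])^2 \ind{|\tau_i - E_\w[\tau_i]| > \e \s\sqrt n}] \to 0$ $P$-a.s.; one bounds the truncated second moments using the $(2+\d)$-th moment bound $E[E_\w[\tau_1^{2+\d}]] < \infty$ (again from $\k > 2+\d$) and a Borel--Cantelli / maximal inequality argument to handle the quenched fluctuations along the subsequence.

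\textbf{Main obstacle and the quenched-vs-averaged subtlety.} The delicate point --- and the one that makes this genuinely harder than a textbook Lindeberg CLT --- is that the $\tau_i$ are only independent under $P_\w$ for a \emph{fixed} $\w$, while the moment and LLN estimates are statements about the random environment $\w$ under $P$; one must interchange these two sources of randomness carefully, typically by establishing the required environment-dependent estimates (convergence of the variance, the Lindeberg sum, and control of $\sup_i \Var_\w(\tau_i)/n$) along the full sequence $P$-a.s.\ rather than merely in $P$-probability. This means the a.s.\ convergence has to be upgraded from convergence in probability via variance bounds on the relevant environment functionals plus Borel--Cantelli, using that sums like $\sum_i \Var_\w(\tau_i)$ have good concentration because the $\tau_i$ depend on disjoint-ish pieces of the environment with exponentially decaying correlations (a consequence of $r_p<1$ for $p<\k$, which gives exponential moments controlling how far into the past $\tau_i$ effectively depends). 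Once these environment-level facts are in hand $P$-a.s., the conditional CLT given $\w$ is a direct application of the Lindeberg--Feller theorem, and pasting in the $X_n$ version via the inversion above completes the proof. I expect the bulk of the work --- and the part that must be made quantitative for the paper's actual Berry--Esseen-type results --- to be precisely this control of the tail/dependence of $\tau_i$ as a functional of the environment.
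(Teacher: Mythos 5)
The paper cites this theorem from \cite{aRWRE,gQCLT,pThesis} rather than proving it, but the strategy you outline --- decompose $T_n$ into the $P_\w$-independent increments $\tau_i$, apply Lindeberg--Feller using the moment control from $\k>2$ and the ergodic convergence $\Var_\w(T_n)/n \to \s^2$, then transfer to $X_n$ by inversion --- is indeed the same scaffolding that underlies both the cited works and the quantitative refinements in Sections 4--5 of this paper (where Lindeberg--Feller is upgraded to a Berry--Esseen estimate, Theorem 4.1). So your overall plan is correct.

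There is one genuine slip in the inversion step. The identity you write, $\{X_k \ge n\} = \{T_n \le k\}$, is false as stated: it holds only after replacing $X_k$ by the running maximum $X_k^* = \max_{j\le k} X_j$, because the walk can reach $n$ and then backtrack below it. You do acknowledge that the gap between $X_k$ and its behavior after $T_n$ must be controlled, but the justification offered --- that it is ``negligible since $T_{n+1}-T_n$ is tight'' --- does not yield the needed bound. Tightness of the crossing time $\tau_{n+1}$ tells you the interval $[T_n,T_{n+1})$ is short; it does not by itself bound how far the walker wanders below its record $n$ during (or after) that interval. What actually controls $X_n^*-X_n$ is the exponential decay of the backtracking probability $\P(T_{-m}<\infty)$ (valid whenever $\k>0$), which gives $X_n^*-X_n=O(\log n)$ with high $P_\w$-probability --- this is exactly the paper's Lemma~\ref{XnXns}. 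For the non-quantitative CLT you only need tightness of $X_n^*-X_n$, which follows from the same exponential bound, but the correct object to control is $X_n^*-X_n$, not $T_{n+1}-T_n$. A minor additional note: the $P$-a.s.\ fluctuation control of environment functionals such as $\Var_\w(T_n)$ is carried out in the paper by martingale decompositions (Lemma~\ref{lem:Mnmd} and Propositions~\ref{prop:qETfmd}--\ref{prop:qVmd1}) rather than an explicit ``exponentially decaying correlations'' mixing argument, though the two viewpoints are closely related.
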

Before continuing, some important differences between the quenched and averaged CLTs in Theorems \ref{th:aclt} and \ref{th:qclt} should be noted. 
\begin{itemize}
 \item The quenched CLTs in Theorem \ref{th:qclt} require a random (depending on the environment) centering. Indeed, when Assumptions \ref{asm:iid} and \ref{asm:k2} hold it follows from a CLT for sums of ergodic sequences that $\frac{E_\w[T_n]-n/\vp}{\sqrt{n}}$ converges in distribution to a centered Gaussian (see \cite{zRWRE} for details) and therefore one cannot have a quenched CLT for either $T_n$ or $X_n$ with determinstic centering. 
 \item The quenched CLTs are much stronger statements than the averaged CLTs. Indeed, since the quenched probabilities are random variables (randomness coming from the environment $\w$), the limits in the quenched CLTs are required to hold for $P$-a.e.\ environment $\w$. Moreover, the quenched CLTs in Theorem \ref{th:qclt} together with the CLT for $\frac{E_\w[T_n]-n/\vp}{\sqrt{n}}$ can be used to obtain the averaged CLTs in Theorem \ref{th:aclt}. 
 \item Both the quenched and averaged CLTs are known to hold under somewhat more general assumptions than we have used here. In particular, the CLTs have been proved for RWRE in ergodic environments with certain mixing conditions \cite{zRWRE,gQCLT,pThesis}, though in these cases the parameter $\k$ needs to be defined differently than in Assumption \ref{asm:k2} or \eqref{oldkdef}. 
\end{itemize}

\subsection{Main results}

The main results of the present paper concern the rates of convergence in the quenched CLT results in Theorem \ref{th:qclt}. Rates of convergence for the averaged CLT are also of interest, but require different methods and will be studied in a future paper. 

Our approach to the quenched CLTs in this paper will be to follow the approach first used by Alili in \cite{aRWRE} in which one first proves a CLT for the hitting times and then uses this to deduce the CLT for the position of the walk. Therefore, our first two main results concern the rates of convergence for the quenched CLT for hitting times. Note that while the centering in Theorem \ref{th:qclt} needs to be random, the scaling is deterministic. The following two theorems however show that the rate of convergence in the quenched CLT can be improved by using an environment-dependent scaling as well. 

\begin{thm}\label{th:BETn}
Let $\overline{F}_{n,\w}(x) = P_\w\left(  \frac{T_n - E_\w[T_n]}{\sqrt{\Var_\w(T_n)}} \leq x \right)$ be the normalized quenched distribution of $T_n$. 
 \begin{itemize}
  \item If $\kappa > 3$, then there exists a constant $C\in(0,\infty)$ such that 
\[
 \limsup_{n\ra\infty}  \sqrt{n} \left\| \overline{F}_{n,\w} - \Phi \right\|_\infty \leq C, \qquad P\text{-a.s.}
\]
 \item If $\k \in (2,3]$, then for any $\e>0$
\[
  \lim_{n\ra\infty} n^{\frac{3}{2}-\frac{3}{\k}-\e} \left\| \overline{F}_{n,\w} - \Phi \right\|_\infty  = 0, \qquad P\text{-a.s.}
\]
 \end{itemize}
\end{thm}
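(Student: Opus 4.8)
The plan is to use the standard decomposition of the hitting time under the quenched law. Since Assumption~\ref{asm:k2} forces the walk to be transient to the right, $T_n = \sum_{k=1}^n \tau_k$ with $\tau_k := T_k - T_{k-1}$, and under $P_\w$ the increments $\tau_1,\dots,\tau_n$ are \emph{independent}, with $\tau_k$ distributed as $T_1$ in the shifted environment $\th^{k-1}\w$; in particular $\Var_\w(T_n) = \sum_{k=1}^n \Var_\w(\tau_k)$. Thus, writing $s_n^2 := \Var_\w(T_n)$, we have $\overline F_{n,\w}(x) = P_\w\big( s_n^{-1}\sum_{k=1}^n(\tau_k - E_\w\tau_k)\le x\big)$, which puts us in the setting of the Berry--Esseen theorem for sums of independent, non-identically distributed random variables; in the form valid under $(2+\d)$-moment assumptions this gives, for $\d\in(0,1]$,
\[
\big\|\overline F_{n,\w}-\Phi\big\|_\infty \;\le\; A_\d\,\frac{\sum_{k=1}^n E_\w\!\big[\,|\tau_k - E_\w\tau_k|^{2+\d}\,\big]}{s_n^{2+\d}}.
\]
Because $\k>2$ gives $1/\k-1/2<0$, the resulting exponent is most favorable at $\d=1$, so I would take $\d=1$ throughout and reduce the theorem to a lower bound $s_n^2\gtrsim n$ and an upper bound on $L_n := \sum_{k=1}^n E_\w[|\tau_k-E_\w\tau_k|^3]$, both $P$-a.s.

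The lower bound on $s_n^2$ is the easy ingredient: $k\mapsto\Var_\w(\tau_k)=\Var_{\th^{k-1}\w}(T_1)$ is a stationary ergodic sequence (Assumption~\ref{asm:iid}) whose mean $\s^2=E[\Var_\w(T_1)]$ is finite by Theorem~\ref{th:qclt} and strictly positive outside the degenerate deterministic environment, so the ergodic theorem gives $s_n^2=\s^2 n(1+o(1))$ and $s_n^3=\s^3 n^{3/2}(1+o(1))$, $P$-a.s. The crux is $L_n$, and here I expect the main obstacle to be a tail estimate, under $P$, for the quenched moment $E_\w[T_1^p]$ regarded as a function of the environment: using the explicit formulas that express $E_\w[T_1^p]$ as polynomials in the ladder products $\rho_{-1}\rho_{-2}\cdots\rho_{-i}$, together with $r_p=E[\rho_0^p]<1$ for $p<\k$ (and, for sharp constants, Kesten-type renewal asymptotics for such products), one shows that $E_\w[T_1^p]$ has tail index $\k/p$; concretely $P\big(E_\w[T_1^3]>t\big)\le t^{-(\k-\e)/3}$ for large $t$ and every $\e>0$. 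Since $E_\w[|\tau_k-E_\w\tau_k|^3]\le C\,E_\w[\tau_k^3]$ by Jensen, the summands of $L_n$ are, up to a constant, the stationary sequence $E_{\th^{k-1}\w}[T_1^3]$, which has this tail and, because the ladder products decay geometrically on average, has rapidly decaying dependence.

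Two regimes then close the argument. If $\k>3$ the tail index $\k/3>1$ gives $E[E_\w[T_1^3]]=\E[T_1^3]<\infty$, so the ergodic theorem yields $L_n=O(n)$ and the Berry--Esseen bound becomes $\|\overline F_{n,\w}-\Phi\|_\infty\le A_1 L_n/s_n^3=O(n^{-1/2})$, $P$-a.s., which is the first assertion. If $\k\in(2,3]$ the summands of $L_n$ are heavy-tailed with index $\k/3\le 1$, and a standard truncation-plus-Borel--Cantelli argument for sums of such stationary sequences (truncating at level $n^{3/\k+\e}$, controlling the exceedances by a union bound and the truncated sum by Markov's inequality) gives $L_n=O(n^{3/\k+\e})$, $P$-a.s., for every $\e>0$; combined with $s_n^3\sim\s^3 n^{3/2}$ this yields $\|\overline F_{n,\w}-\Phi\|_\infty=O(n^{3/\k-3/2+\e})$, which is equivalent to $n^{3/2-3/\k-\e}\|\overline F_{n,\w}-\Phi\|_\infty\to 0$ and proves the second assertion. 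The remaining points — the independence of the $\tau_k$ and the variance identity (standard for transient one-dimensional RWRE), the positivity of $\s^2$, and making the heavy-tailed strong law precise — are routine; the genuine difficulty is the environment tail estimate for the quenched moments, which has to track the competition between $\k$ and the order of the moment.
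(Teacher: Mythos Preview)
Your approach is essentially the same as the paper's: apply the Berry--Esseen bound for independent non-identical summands with $\d=1$, use the ergodic theorem for $s_n^2\sim\s^2 n$, and control the quenched third moments via the environment tail of $E_\w[\tau_1^3]$ (the paper's Lemma~\ref{lem:taumtail} is exactly your ``genuine difficulty'', stated as $E\big[(E_\w[\tau_1^m])^p\big]<\infty$ for $p<\k/m$). The only tactical difference is in the case $\k\in(2,3]$: where you propose truncation plus Borel--Cantelli for the heavy-tailed strong law $L_n=O(n^{3/\k+\e})$, the paper uses the cleaner observation that for $p<\k/3\le 1$ subadditivity of $x\mapsto x^p$ gives $L_n^p\le\sum_{k=1}^n\big(E_\w|\tau_k-E_\w\tau_k|^3\big)^p$, and Birkhoff's ergodic theorem applied to these $p$-th powers (which have finite mean) yields $\limsup_n n^{-1/p}L_n<\infty$ directly---no truncation, no subsequence argument, and no need to quantify the mixing of the sequence.
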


\begin{thm}\label{th:BETnds} 
Let $F_{n,\w}(x) =  P_\w\left(  \frac{T_n - E_\w[T_n]}{\s\sqrt{n}} \leq x \right)$ be the quenched distribution of $T_n$ with random (environment dependent) centering and deterministic scaling. 
\begin{itemize} 
 \item If $\k > 4$, then for any $\e>0$ 
 \[
  \lim_{n\ra\infty} n^{\frac{1}{2}-\e} \left\| F_{n,\w} - \Phi \right\|_\infty  = 0, \quad P\text{-a.s.}
 \]
 \item If $\k \in (2,4]$, then for any $\e>0$ 
 \[
  \lim_{n\ra\infty} n^{1-\frac{2}{\k}-\e} \left\| F_{n,\w} - \Phi \right\|_\infty  = 0, \quad P\text{-a.s.}
 \]
\end{itemize}
\end{thm}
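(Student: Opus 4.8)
\emph{Proof proposal.} The plan is to deduce Theorem~\ref{th:BETnds} from Theorem~\ref{th:BETn} by comparing the deterministic scaling $\s\sqrt n$ with the self-normalizing scaling $\sqrt{\Var_\w(T_n)}$. Set
\[
 a_n = a_n(\w) = \frac{\s\sqrt n}{\sqrt{\Var_\w(T_n)}},
\]
so that, since $a_n>0$,
\[
 F_{n,\w}(x) = P_\w\!\left( \frac{T_n-E_\w[T_n]}{\sqrt{\Var_\w(T_n)}} \le a_n x \right) = \overline{F}_{n,\w}(a_n x), \qquad x\in\R .
\]
By the triangle inequality,
\[
 \| F_{n,\w} - \Phi \|_\infty \le \| \overline{F}_{n,\w} - \Phi \|_\infty + \sup_{x\in\R}\bigl| \Phi(a_n x) - \Phi(x) \bigr| .
\]
Since $\sup_{t\in\R}|t|\,\Phi'(t)<\infty$, one has $\sup_x|\Phi(ax)-\Phi(x)| \le C\,|a-1|$ for $a$ bounded away from $0$; because $\Var_\w(T_n)/n \to \s^2$ $P$-a.s.\ (see below) we have $a_n\to 1$ a.s., so for $P$-a.e.\ $\w$ and all large $n$,
\[
 \| F_{n,\w} - \Phi \|_\infty \le \| \overline{F}_{n,\w} - \Phi \|_\infty + C\,|a_n - 1|,
 \qquad
 |a_n-1| \le C\Bigl| \tfrac1n\Var_\w(T_n) - \s^2 \Bigr| .
\]
The first term is controlled by Theorem~\ref{th:BETn}, so everything reduces to estimating the a.s.\ rate of convergence of $\tfrac1n\Var_\w(T_n)$ to $\s^2$.

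\emph{Rate of convergence of $\tfrac1n\Var_\w(T_n)$.} Under $P_\w$ the increments $T_{k+1}-T_k$ are independent (strong Markov property), hence $\Var_\w(T_n) = \sum_{k=0}^{n-1}\beta_k$ with $\beta_k := \Var_\w^k(T_{k+1})$; by Assumption~\ref{asm:iid} the sequence $\{\beta_k\}_{k\ge0}$ is stationary and ergodic with $E[\beta_k] = E[\Var_\w(T_1)] = \s^2$, which already gives the LLN $\tfrac1n\Var_\w(T_n)\to\s^2$ a.s. The key quantitative input, which I expect to follow from (the proof of) Lemma~\ref{lem:taumtail}, is that $E[\beta_0^p]<\infty$ for every $p<\k/2$, the exponent $\k/2$ reflecting that $\Var_\w(T_1)$ is a second-order functional of products of the $\rho_x$'s whose tail exponent is $\k$. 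Granting this, a Marcinkiewicz--Zygmund type strong law for $\{\beta_k\}$ yields, for every $p<\k/2$ and every $\e>0$,
\[
 \Bigl| \sum_{k=0}^{n-1}(\beta_k-\s^2) \Bigr| = o\bigl(n^{1/p+\e}\bigr), \qquad P\text{-a.s.},
\]
so that $|a_n-1| = o(n^{1/p-1+\e})$ a.s. When $\k>4$ we may take $p=2+\d$ for some $\d>0$ (equivalently use the law of the iterated logarithm, since then $\beta_0$ has finite variance), giving $|a_n-1| = o(n^{-1/2+\e})$ for every $\e>0$; when $\k\in(2,4]$, letting $p\uparrow\k/2$ gives $|a_n-1| = o(n^{2/\k-1+\e})$ for every $\e>0$.

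\emph{Combining the estimates.} If $\k>4$ then $\k>3$, so by the first bullet of Theorem~\ref{th:BETn} we get $\|\overline{F}_{n,\w}-\Phi\|_\infty = O(n^{-1/2})$, whence $\|F_{n,\w}-\Phi\|_\infty = o(n^{-1/2+\e})$, i.e.\ $n^{1/2-\e}\|F_{n,\w}-\Phi\|_\infty\to0$ a.s. If $\k\in(2,4]$ one checks that the scaling-correction term $o(n^{2/\k-1+\e})$ is of order at least that of the self-normalized term: for $\k\in(2,3]$, $\|\overline{F}_{n,\w}-\Phi\|_\infty = o(n^{3/\k-3/2+\e})$ by the second bullet of Theorem~\ref{th:BETn} and $3/\k-3/2 < 2/\k-1$ since $\k>2$; for $\k\in(3,4]$, $\|\overline{F}_{n,\w}-\Phi\|_\infty = O(n^{-1/2})$ and $-1/2 \le 2/\k-1$ since $\k\le4$. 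In either case $\|F_{n,\w}-\Phi\|_\infty = o(n^{2/\k-1+\e})$ a.s., which is the claim.

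\emph{Main obstacle.} The delicate step is the Marcinkiewicz--Zygmund bound for $\{\beta_k\}$ in the sub-quadratic regime $\k\le4$: although $\w$ is i.i.d., $\beta_k=\Var_\w^k(T_{k+1})$ depends on $\{\w_x : x\le k\}$, i.e.\ on the entire half-line to the left of $k$, so $\{\beta_k\}$ is \emph{not} finitely dependent and the i.i.d.\ Marcinkiewicz--Zygmund theorem does not apply directly. The plan is to truncate---replacing $\beta_k$ by the variance of the corresponding hitting time for the walk reflected at site $k-C\log k$, which depends only on $\{\w_x : k-C\log k\le x\le k\}$---to show the truncation error is a.s.\ summable (the influence of $\w_{k-m}$ on $\beta_k$ being exponentially small in $m$ off an event of probability summable in $k$ after the logarithmic choice of cutoff), and then to apply a Marcinkiewicz--Zygmund/maximal inequality to the resulting block-dependent sequence; monotonicity of $n\mapsto\Var_\w(T_n)$ together with a maximal inequality is used to pass from a subsequence to all $n$. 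Verifying the moment estimate $E[\beta_0^p]<\infty$ for $p<\k/2$ is a second, more routine, ingredient.
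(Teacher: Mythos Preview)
Your overall scheme is exactly the paper's: write $F_{n,\w}(x)=\overline F_{n,\w}(a_n x)$ with $a_n=\s\sqrt n/\sqrt{\Var_\w(T_n)}$, split by the triangle inequality, control $\|\overline F_{n,\w}-\Phi\|_\infty$ via Theorem~\ref{th:BETn}, and reduce the scaling correction to an almost-sure rate for $\Var_\w(T_n)-\s^2 n$. Your case analysis comparing the two exponents is also the right one.

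The substantive divergence is in how you propose to get that rate. You aim for a Marcinkiewicz--Zygmund strong law for the stationary sequence $\beta_k=\Var_\w^k(T_{k+1})$, and you correctly flag the obstacle: $\beta_k$ depends on the whole left half-line, so you plan to truncate by reflecting at $k-C\log k$ and work with an approximately block-dependent sequence. This can in principle be pushed through, but note two wrinkles: the truncation range $C\log k$ grows with $k$, so the truncated sequence is neither stationary nor $m$-dependent for any fixed $m$, and in the regime $\k>4$ you need more than the $p<2$ Marcinkiewicz--Zygmund bound---you invoke the LIL, which again is not immediate for this dependent sequence.

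The paper sidesteps all of this with a martingale approximation (Proposition~\ref{prop:qVmd1}). Using the recursion $V_k=\rho_k(1+\rho_k)(1+\mu_{k-1})^2+\rho_k V_{k-1}$ one computes $E[V_k\mid\mathcal F_{k-1}]$ explicitly and finds, after some algebra, that the martingale $L_n=\sum_{k<n}\bigl(V_k-E[V_k\mid\mathcal F_{k-1}]\bigr)$ differs from $(1-r_1)\bigl(\Var_\w(T_n)-\s^2 n\bigr)$ only by terms of the form $E_\w[T_n]-n/\vp$, $\sum_{k<n}(\mu_k^2-E[\mu_0^2])$, and boundary quantities $\mu_{n-1}^2,V_{n-1}$, each of which is handled separately (the $\mu_k^2$ sum by a second martingale of the same type). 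A single general lemma (Lemma~\ref{lem:Mnmd}, proved via Burkholder--Davis--Gundy) then gives $L_n=o(n^{2/(4\wedge\k)+\e})$ a.s., covering both the $\k>4$ and $\k\le 4$ regimes at once. The point is that martingale differences are automatically orthogonal, so the long-range dependence you are fighting simply disappears; no truncation, no block-dependence bookkeeping, and no separate LIL argument is needed. Your moment input $E[\beta_0^p]<\infty$ for $p<\k/2$ is exactly what the paper uses (Lemma~\ref{lem:taumtail} with $m=2$), so the difference really is just the device for turning that moment bound into a fluctuation rate.
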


Our final main result is the following bounds on the rates of convergence in the quenched CLT for $X_n$. Note that the results in this theorem give different almost sure and in probability rates of convergence for the quenched CLT. 
 
\begin{thm}\label{th:BEXn}
Let $G_{n,\w}(x) = P_\w\left( \frac{X_n-n\vp +Z_n(\w)}{\s \vp^{3/2} \sqrt{n}} \leq x \right)$. 
 If $\k>2$, then for any $\e>0$
\[
 \lim_{n\to\infty} n^{\frac{1}{4} - \frac{1}{2\k} - \e}\left\| G_{n,\w} - \Phi \right\|_\infty  = 0, \quad P\text{-a.s.}
\]
Moreover, by relaxing the mode of convergence to that of in probability, then the following stronger rates of convergence can be obtained. 
\begin{itemize}
 \item If $\k \in (2,\frac{12}{5})$ then for any $\e>0$, 
\begin{equation}\label{BEXnip1}
  \lim_{n\to\infty} n^{\frac{3}{2} - \frac{3}{\k} - \e}\left\| G_{n,\w} - \Phi \right\|_\infty  = 0, \quad \text{in $P$-probability.}
\end{equation}
 \item If  $\k \geq \frac{12}{5}$ then 
for any $\e>0$, 
\begin{equation}\label{BEXnip2}
  \lim_{n\to\infty} n^{\frac{1}{4} - \e}\left\| G_{n,\w} - \Phi \right\|_\infty  = 0, \quad \text{in $P$-probability.}
\end{equation}
\end{itemize}
\end{thm}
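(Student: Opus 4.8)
The plan is to follow Alili's approach and deduce the quenched CLT for $X_n$ from the quenched CLTs for hitting times in Theorems~\ref{th:BETn} and \ref{th:BETnds}, via the inversion $\{X_n \ge k\} = \{T_k \le n\}$. Fix $x$ and put $k = k(n,x) = \fl{\, n\vp - Z_n(\w) + x\s\vp^{3/2}\sqrt n \,} + 1$, so that $G_{n,\w}(x) = P_\w(X_n \le k-1) = 1 - P_\w(T_k \le n)$. Writing $P_\w(T_k \le n) = F_{k,\w}\bigl(\tfrac{n - E_\w[T_k]}{\s\sqrt k}\bigr) = \overline F_{k,\w}\bigl(\tfrac{n-E_\w[T_k]}{\sqrt{\Var_\w(T_k)}}\bigr)$ and applying the sup-norm bound of Theorem~\ref{th:BETnds} or \ref{th:BETn} — which is exactly what is needed, since $k$ depends on both $x$ and $\w$, and since $k \asymp n$ the rate holds uniformly over the values of $k$ that occur — gives
\[
 G_{n,\w}(x) = \Phi\!\left( \frac{E_\w[T_k] - n}{\s\sqrt k} \right) + \bigo(\eta_n),
\]
where $\eta_n$ is the corresponding hitting-time rate. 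It then remains to show the argument of $\Phi$ equals $x$ up to a small error uniformly in $x$, and to pass to $\| G_{n,\w} - \Phi \|_\infty$ via $|\Phi(u) - \Phi(v)| \le \tfrac{1}{\sqrt{2\pi}}|u-v|$, with large $|x|$ handled separately.

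For the expansion of $E_\w[T_k]$, write $E_\w[T_k] = E_\w[T_{\fl{n\vp}}] + E_\w^{\fl{n\vp}}[T_k]$; by the definition of $Z_n(\w)$ the first term equals $\tfrac{\fl{n\vp}}{\vp} + \tfrac{Z_n(\w)}{\vp}$, and the second equals $\tfrac{k-\fl{n\vp}}{\vp} + (S_k - S_{\fl{n\vp}})$, where $S_m := \sum_{i=0}^{m-1}\chi_i$ and $\chi_i := E_\w^i[T_{i+1}] - \tfrac1\vp$ is a stationary sequence with mean zero ($\E[T_1] = 1/\vp$) and finite variance (precisely because $\k > 2$; cf.\ Lemma~\ref{lem:taumtail}). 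Substituting the value of $k$, using $\sqrt k = \sqrt{n\vp}\,(1 + \bigo(n^{-1/2+\epsilon}))$, and noting that the two occurrences of $Z_n(\w)/\vp$ cancel, one is left with
\[
 \frac{E_\w[T_k] - n}{\s\sqrt k} = x + \bigo\!\left( (1+|x|)\,n^{-1/2+\epsilon} \right) + \bigo\!\left( \frac{|S_k - S_{\fl{n\vp}}|}{\s\vp^{1/2}\sqrt n} \right),
\]
and likewise with the environment-dependent scaling if Theorem~\ref{th:BETn} is used. Since $|k - \fl{n\vp}| \asymp |Z_n(\w)| + |x|\sqrt n$ and $|Z_n(\w)| = \bigo(\sqrt{n\log\log n})$ a.s., the dominant error — after restricting to $|x| \le c\sqrt{\log n}$, the complementary range being disposed of by Gaussian tail decay together with a crude quenched upper-tail bound for $T_k$ from Lemma~\ref{lem:taumtail} — is the oscillation of the walk $S$ over a window of length of order $\sqrt n$, which is of order $n^{1/4}$ because $\chi_i$ has finite variance; dividing by $\sqrt n$ produces the $n^{-1/4}$ term responsible for the exponent $\tfrac14$. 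Feeding in $\eta_n$ and using Markov's and Doob's inequalities then yields the in-probability rates \eqref{BEXnip1}--\eqref{BEXnip2}: for $\k \ge \tfrac{12}{5}$ the window term is the bottleneck, while for $\k \in (2,\tfrac{12}{5})$ one instead uses the sharper random-scaling bound of Theorem~\ref{th:BETn}, whose rate $n^{-(3/2 - 3/\k - \epsilon)}$ then governs.

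For the almost sure statement the window term cannot be handled as efficiently, since $\chi_i$ has finite $p$-th moments only for $p < \k$. Here I would first bound $\| G_{n_j,\w} - \Phi \|_\infty$ along a sparse subsequence $n_j = \fl{j^\b}$ by Borel--Cantelli, estimating $E\bigl[\max_{|l| \le L}|S_{\fl{n_j\vp}+l} - S_{\fl{n_j\vp}}|^p\bigr]$ over the relevant window $L \asymp \sqrt{n_j \log n_j}$ by Doob's maximal inequality together with a Rosenthal/Marcinkiewicz--Zygmund moment bound, and then control the oscillation $\sup_{n_j \le n < n_{j+1}} \| G_{n,\w} - G_{n_j,\w} \|_\infty$ by coupling the walks on $[n_j,n]$ and re-expanding $X_n - n\vp + Z_n(\w)$ about $X_{n_j} - n_j\vp + Z_{n_j}(\w)$. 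Optimizing the subsequence exponent $\b$ against the moment constraint $p < \k$ is what pins down the exponent $\tfrac14 - \tfrac1{2\k}$.

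The step I expect to be the main obstacle is precisely this last uniformity: bounding the fluctuation error $|S_k - S_{\fl{n\vp}}|/\sqrt n$ simultaneously over all $x$ and, for the almost sure statement, over all $n$, with only $p < \k$ moments available for the $\chi_i$, and reconciling it both with the oscillation of $G_{n,\w}$ in $n$ and with the hitting-time Berry--Esseen input $\eta_n$. Once a uniform fluctuation bound of the right order is in hand, the rest — the passage between the deterministic and environment-dependent scalings, and the accounting of the error terms of different orders (rounding, of size $\bigo(1/\sqrt n)$; the scaling mismatch, of size $\bigo(n^{-1/2+\epsilon})$ times an environment fluctuation; and the dominant window term) — is routine.
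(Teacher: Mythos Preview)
Your overall architecture matches the paper's: invert to hitting times, apply Theorem~\ref{th:BETn} at $k=k(n,\w,x)$, and identify the dominant error as the fluctuation of $E_\w[T_k]-E_\w[T_{\fl{n\vp}}]-(k-\fl{n\vp})/\vp$ over a window of width $\asymp n^{1/2+\e}$. Two points need repair, one small and one substantive.

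First, the identity $\{X_n\ge k\}=\{T_k\le n\}$ is false for a nearest-neighbor walk: the walk can hit $k$ and then backtrack. What holds is $\{X_n^*\ge k\}=\{T_k\le n\}$ with $X_n^*=\max_{j\le n}X_j$. The paper handles this with a short lemma (Lemma~\ref{XnXns}) showing $P_\w(X_n^*-X_n\ge B\log n)\le n^{-1/2}$ eventually a.s., which lets you transfer rates from $G_{n,\w}^*$ to $G_{n,\w}$ at no cost.

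Second, and more importantly, your window estimate invokes Rosenthal/Marcinkiewicz--Zygmund and Doob on $S_m=\sum_{i<m}\chi_i$, but the $\chi_i=\mu_i-\tfrac1\vp$ are neither independent nor martingale differences: $\mu_i$ depends on the entire environment to the left of $i$, and $E[\mu_i\mid\mathcal F_{i-1}]=1+r_1+r_1\mu_{i-1}$ is random. The paper's fix (Proposition~\ref{prop:qETfmd}, the technical core) is a martingale approximation: set $M_n=\sum_{k<n}(\mu_k-E[\mu_k\mid\mathcal F_{k-1}])$ and observe $M_n=(1-r_1)\bigl(E_\w[T_n]-n/\vp\bigr)+r_1(\mu_{n-1}-\mu_{-1})$, so the window fluctuation of $S$ is controlled by (i) a genuine martingale increment over a length-$n^{1/2+\e}$ interval, handled by Burkholder--Davis--Gundy with $p<\k$ moments, plus (ii) a boundary term $\max_{k\in I_{\e,n}}\mu_{k-1}$, handled by a union bound. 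Your optimization of the subsequence exponent against $p<\k$ then goes through exactly as you say and gives $\tfrac14-\tfrac1{2\k}$. With this in place, no coupling between consecutive $n$ is needed: the paper interpolates simply by choosing the subsequence $n_j=\fl{j^\gamma}$ sparse enough for summability yet dense enough that the windows $I_{\e,n_j}$ and $I_{\e,n_{j+1}}$ overlap, so the uniform-in-$x$ bound at $n_j$ already covers all intermediate $n$.
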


An outline of the proofs of the main results is as follows. 
Sections \ref{sec:Lp} and \ref{sec:asymp} contain analysis of quenched moments of hitting times that will be used later in the proofs of the main results. 
In particular, in Section \ref{sec:Lp} we show that $E[(E_\w[T_1^m])^p] < \infty$ if $p \in (0,\k/m)$, and in Section \ref{sec:asymp} we control the fluctuations of $E_\w[T_n]$ and $\Var_\w(T_n)$ (Section \ref{sec:asymp} is the most technical and difficult part of the paper).  
The proofs of Theorems \ref{th:BETn} and \ref{th:BETnds} are then given in Section \ref{sec:Trates}.
If we let
\[
 \tau_k = T_k - T_{k-1}, \quad k\geq 1, 
\]
then under the quenched measure $P_\w$ the random variables $\{\tau_k\}_{k\geq 1}$ are independent (but not identically distributed). 
Therefore, applying known results for sums of independent random variables gives a bound of $\|\overline{F}_{n,\w}-\Phi\|_\infty$ in terms of the centered quenched moments of the crossing times $\tau_k$. 
Control of these quenched moments then follows from results obtained in Section \ref{sec:Lp} and gives the rates of convergence in Theorem \ref{th:BETn}. 
Since the quenched distributions $\overline{F}_{n,\w}$ and $F_{n,\w}$ differ only in the choice of scaling, Theorem \ref{th:BETnds} then follows from Theorem \ref{th:BETn} and control of the fluctuations of $\Var_\w(T_n) - \s^2 n$ which were obtained in Section \ref{sec:asymp}. 
Finally, in Section \ref{sec:Xrates} the quenched rates of convergence in Theorem \ref{th:BEXn} are obtained from Theorem \ref{th:BETn} in much the same way as the renewal process CLT is obtained from the standard CLT. 
It is here that the need for the quenched centering in the quenched CLTs presents a real difficulty, and in fact the control of the fluctuations of $E_\w[T_n] - n/\vp$ obtained in Section \ref{sec:asymp} are the main contributor to the almost sure rates of convergence in Theorem \ref{th:BEXn}.

\begin{figure}[th]
 \includegraphics[width=0.5\textwidth]{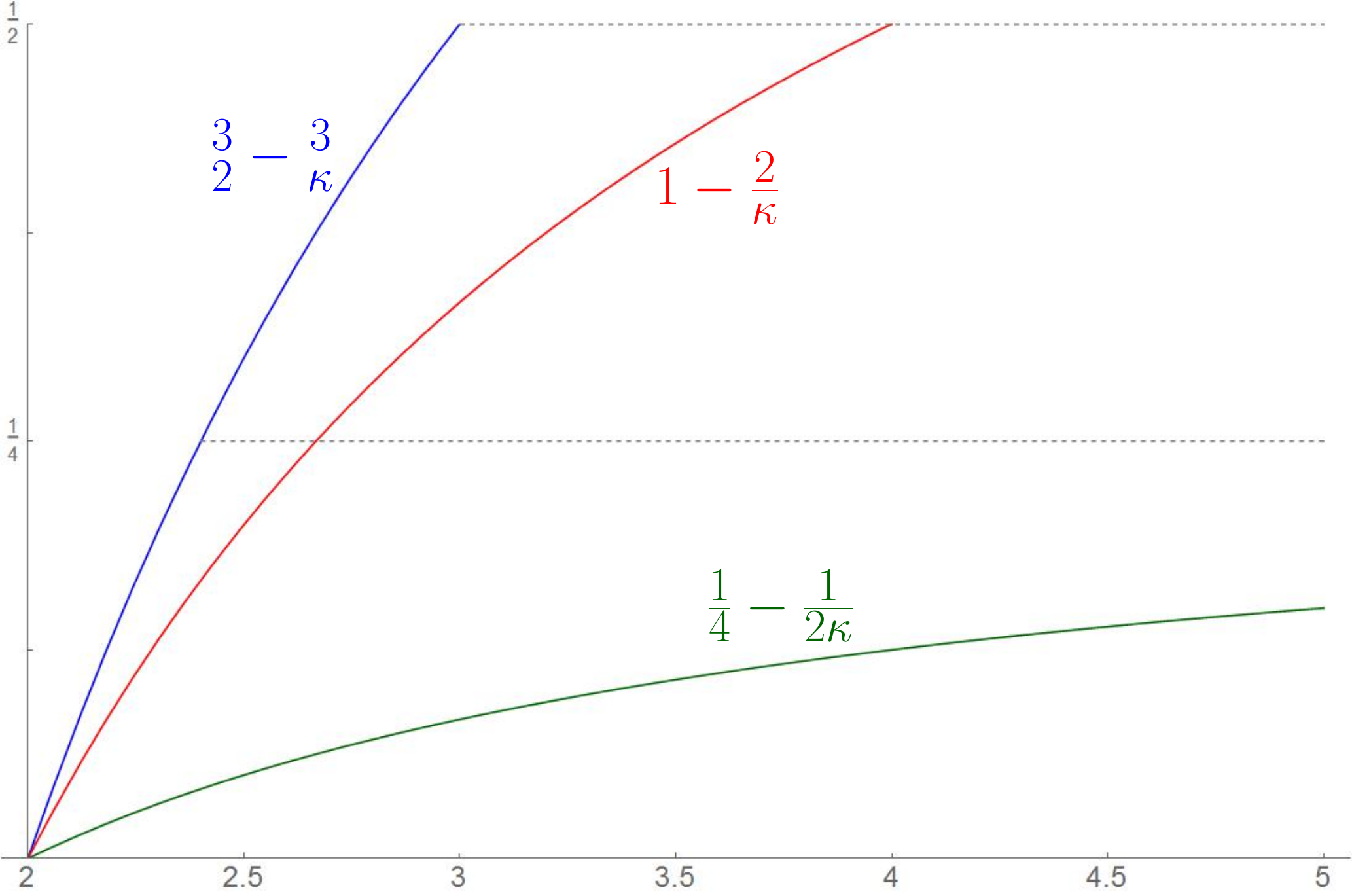}
\caption{A comparison of the different polynomial exponents that appear in Theorems \ref{th:BETn}--\ref{th:BEXn}. The dotted lines are at height $1/4$ and $1/2$.}
\end{figure}

\subsection{Discussion of main results and future work}

Central limit theorems for random motion in random media are closely related to problems in stochastic homogenezation, a connection going back at least to  Papanicolaou and Varadan \cite{pvDWRC}. 
Results in quantitative stochastic homogenization (that is, results which give bounds on the rate of convergence of the solution of a PDE with random coefficients to the solution of the deterministic homogenized PDE) were first obtained by Yurinski\u\i \cite{yASONDE,yEAMD}, but recently there have been a number of important breakthroughs \cite{csROCH,goOVESH,asQSHEENDF,gnoQOESH}.
However, the only results of which we are aware of giving quantitative rates of convergence for central limit theorems for RWRE are by Mourrat in \cite{mQCLTRWRC}. Mourrat's results differ from those in the present paper as they are for the random conductance model of RWRE rather than for RWRE in i.i.d.\ and he proves quantitative rates of convergence for the averaged CLT rather than the quenched CLT. Mourrat also gives rates of convergence for the random conductance model in any dimension $d\geq 1$, while our methods are restricted to one dimension.
It should also be noted that the martingale method that Mourrat uses is limited to proving at best rates of convergence of $n^{-1/5}$, while the rates of convergence in Theorems \ref{th:BETn}--\ref{th:BEXn} are in many cases faster than $n^{-1/5}$.

A natural question regarding the main results of this paper is the optimality of the rates of convergence obtained. The quenched rates in Theorem \ref{th:BETn} when $\k>3$ are clearly optimal, though it is not clear if the other rates in Theorems \ref{th:BETn} and \ref{th:BETnds} are optimal. However, we conjecture that they are optimal in the sense that no better almost sure polynomial rate can be obtained. In particular, if one sets $\e=0$ in these results we conjecture that the limits do not exist. For instance,  if $\k\in (2,3)$ we conjecture that 
\[
 \liminf_{n\to\infty} n^{\frac{3}{2}-\frac{3}{\k}} \| \overline{F}_{n,\w} - \Phi \|_\infty = 0, \quad\text{and}\quad \limsup_{n\to\infty} n^{\frac{3}{2}-\frac{3}{\k}} \| \overline{F}_{n,\w} - \Phi \|_\infty = \infty, \quad P\text{-a.s.}
\]

It is less clear to us if the rates of convergence in Theorems \ref{th:BEXn} are optimal or not. In particular, one wonders if a different method of proof of the quenched CLT for $X_n$ would lead to an improved rate of convergence. Zeitouni outlines in \cite{zRWRE} how the quenched CLT for $X_n$ can be obtained from a martingale CLT via the ``harmonic corrector'' approach. However, since the fluctuations of the harmonic corrector are given in this case by the fluctuations of $E_\w[T_n]-n/\vp$ it seems that this approach will not yield any better results than that of the approach in the current paper. 
Also, given that Theorems \ref{th:BETn} and \ref{th:BETnds} show that the choice of normalization can affect the rates of convergence, one wonders if the rates of convergence in Theorem \ref{th:BEXn} can be improved by using a different centering or an environment-dependent scaling. In particular, one might suspect that better rates of convergence can be obtained for $\frac{X_n-E_\w[X_n]}{\s \vp^{3/2} \sqrt{n}}$ or $\frac{X_n-E_\w[X_n]}{\sqrt{\Var_\w(X_n)}}$. Unfortunately, as of now we are not aware of any proofs of the quenched central limit theorem that work for these normalizations directly. (Of course one might be able to obtain a quenched CLT for $\frac{X_n-E_\w[X_n]}{\s \vp^{3/2} \sqrt{n}}$ indirectly by using the quenched CLT for $\frac{X_n-n\vp+Z_n(\w)}{\s \vp^{3/2} \sqrt{n}}$ and then proving that $\frac{E_\w[X_n]-n\vp+Z_n(\w)}{\sqrt{n}} \to 0$, $P$-a.s., but this would not lead to any possible improvement in the rate of convergence for the quenched CLT.)

\section{Quenched moments of hitting times}\label{sec:Lp}

In this section we will collect some facts about quenched moments of hitting times that will be useful later.
The main result is the following $L^p$ estimate for the quenched moments of hitting times. 
 

\begin{lem}\label{lem:taumtail}
 If $E[\log \rho_0] < 0$ and $\k > 0$, then for any integer $m\geq 1$, $E\left[E_\w[\tau_1^m]^p \right] < \infty$ for all $p \in (0,\frac{\k}{m})$. 
\end{lem}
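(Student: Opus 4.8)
The plan is to derive a recursive formula for the quenched moments $E_\w[\tau_1^m]$ in terms of the environment and then control the relevant $L^p$ norm. First I would recall that for a one-dimensional RWRE, the quenched expectation of a power of the first crossing time $\tau_1 = T_1$ satisfies a renewal-type identity obtained by conditioning on the first step from $0$: with probability $\w_0$ the walk moves directly to $1$ (contributing one unit of time), and with probability $1-\w_0$ it moves to $-1$ and must then return to $0$ before crossing to $1$. Writing out $E_\w[T_1^m]$ via the multinomial expansion of $(1 + (\text{time to return to }0) + (\text{time from }0\text{ to }1))^m$ and using independence of the excursions under $P_\w$, one gets that $E_\w[T_1^m]$ is bounded by a polynomial expression in $\rho_0 = (1-\w_0)/\w_0$ and the lower-order moments $E_\w[T_1^j]$, $j < m$, together with analogous quantities for the walk started at $-1$; crucially these latter quantities depend only on the environment $\{\w_x\}_{x \le -1}$, which under Assumption \ref{asm:iid} is independent of $\w_0$ and has the same law as $\{\w_x\}_{x\le 0}$ shifted.

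The key step is then an induction on $m$. For $m = 1$ this is the classical computation (essentially $E_\w[T_1] = 1 + 2\sum_{j\le 0}\prod_{i=j}^{0}\rho_i$ in distribution), and finiteness of $E[E_\w[T_1]^p]$ for $p < \k$ follows because the tail of $E_\w[T_1]$ is governed by products of the $\rho_i$'s and $r_p = E[\rho_0^p] < 1$ for $p < \k$ by \eqref{rpdef}, so the series converges in $L^p$ by a Minkowski/geometric-series argument. For the inductive step, I would expand $E_\w[T_1^m]$ as a sum of products of terms of the form $\rho_0^a (\text{shifted copies of } E_\w[T_1^{j}] \text{ with } j\le m)$; the diagonal term involves $E_{\w'}[T_1^m]$ (shifted environment) multiplied by a factor with $E$-expectation of its $p$-th power strictly less than $1$ (again by \eqref{rpdef}, using $p < \k/m \le \k$), which lets one "absorb" it, while all remaining terms involve strictly lower moments $E_\w[T_1^j]$, $j\le m-1$, raised to powers that keep the total "degree" at most $m$, hence are in $L^p$ for $p < \k/m \le \k/j \cdot (j/m)$... more precisely one checks that a product $\prod_k E_\w[T_1^{j_k}]$ with $\sum_k j_k \le m$ lies in $L^p$ for $p < \k/m$ by Hölder's inequality and the induction hypothesis applied with exponents $p m / j_k < \k/j_k$.

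The main obstacle I anticipate is bookkeeping the combinatorial expansion so that the "degree" is conserved: one must verify that every term produced by expanding the $m$-th power of a sum of (number-of-steps) excursion times, after using independence under $P_\w$ to factor the quenched expectation, is a product of quenched moments $E_\w[T_1^{j_k}]$ (of the base walk or its independent shift) with $\sum j_k \le m$, with the only term of full degree $m$ being the single shifted copy that can be absorbed via the strict inequality $E[\rho_0^p] < 1$. Setting this up cleanly — perhaps by introducing a potential/ladder-time decomposition or by directly analyzing the generating-function / branching structure of excursions below $0$ — and then invoking Hölder together with $r_p<1$ and the induction hypothesis, yields $E[E_\w[\tau_1^m]^p] < \infty$ for all $p \in (0, \k/m)$. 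A minor point to handle at the end: the statement is for $\tau_1 = T_1$ started at $0$, so no extra shifting beyond what the recursion already produces is needed.
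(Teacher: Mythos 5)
Your overall plan matches the paper's: condition on the first step, expand $(1+\tau_1+\tau_2)^m$ multinomially, isolate the unique ``full-degree'' term $\rho_0\, E_{\th^{-1}\w}[\tau_1^m]$, and treat the remaining lower-order products by induction on $m$ together with H\"older's inequality with exponents $mp/k_i < \k/k_i$. All of that is in the paper almost verbatim, including the point that a product $\prod_i E_\w[\tau_1^{j_i}]^{\,(\text{shift})}$ of total degree $\leq m$ lies in $L^p$ for $p<\k/m$.

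Where your proposal has a real gap is the ``absorption'' step. You propose to move the diagonal term $\rho_0\, E_{\th^{-1}\w}[\tau_1^m]$ to the left side of the $L^p$ bound and divide by $1-r_p$ (or $1-r_p^{1/p}$ when $p\geq 1$). That manipulation is only valid if you already know $E\bigl[E_\w[\tau_1^m]^p\bigr]<\infty$; otherwise you are subtracting $\infty$ from $\infty$. Nothing in the hypotheses guarantees that finiteness a priori --- it is precisely what the lemma is meant to prove. The paper avoids this by not absorbing: it iterates the recursion $E_\w[\tau_1^m]=f_m(\w)+\rho_0 E_{\th^{-1}\w}[\tau_1^m]$ into the explicit series
\[
E_\w[\tau_1^m]=f_m(\w)+\sum_{k\geq 1}\Pi_{-k+1,0}\, f_m(\th^{-k}\w),
\]
justifies the series representation by first working in environments $\w(n)$ with an added reflection at $-n$ (so that all quenched moments are trivially finite) and then letting $n\to\infty$ by monotone convergence, and finally applies Minkowski's inequality (for $p\geq 1$) or subadditivity of $x\mapsto x^p$ (for $p<1$) term-by-term, using independence to factor $E\bigl[(\Pi_{-k+1,0} f_m(\th^{-k}\w))^p\bigr]=r_p^k\,E[f_m^p]$, a geometric series. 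You do essentially this for $m=1$ (your explicit series for $\mu_0$), but for $m\geq 2$ you switch to absorption, which is the step that would fail without a separate truncation or approximation argument. To close the gap, either carry the truncation/reflection device through for all $m$ as the paper does, or observe that one can first prove the bound for the reflected environments $\w(n)$ (where everything is finite, so absorption is legitimate, with a constant uniform in $n$) and then pass to the limit by monotone convergence.
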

\begin{rem}
 We will only need Lemma \ref{lem:taumtail} for $m\leq 3$ and $\k > 2$ in the present paper. Nevertheless, since the proof generalizes easily to all $m\geq 1$ we give the more general proof here. 
\end{rem}
\begin{rem}
If the parameter $\k$ satisfies the slightly stronger definition \eqref{oldkdef}, and if the technical conditions $E[\rho_0^\k \log \rho_0] < \infty$ and the distribution of $\log \rho_0$ is non-lattice are also satisfied, then it is known that $P(E_\w[\tau_1] > x ) \sim C x^{-\k}$ as $x\to\infty$ which is a stronger statement than the $L^p$ bounds in the statement of Lemma \ref{lem:taumtail} for the case $m=1$.  We conjecture that under these stronger assumptions that similar tail asymptotics hold for $E_\w[\tau_1^m]$ also; that is, we conjecture that for any $m\geq 1$ there exists a constant $C_m>0$ such that $P( E_\w[\tau_1^m] > x) \sim C_m x^{-\k/m}$ as $x\to\infty$. 
However, since such precise tail asymptotics are not needed for our purposes in this paper we content ourselves with the weaker $L^p$ bounds given here. 
\end{rem}

\begin{proof}
We begin by computing recursive formulas for $E_\w[\tau_1^m]$.
To this end it is helpful to introduce the natural left shift operator $\th$ on the space of environments. That is, for any $k \in \Z$, $\th^k \w$ is the environment with $(\th^k\w)_x = \w_{x+k}$ for every $x\in \Z$. 
With this notation, by conditioning on the first step of the walk, 
\begin{align*}
 E_\w[\tau_1^m] &= \w_0 + (1-\w_0)E_\w^{-1}[(1+T_1)^m] \\
&= \w_0 + (1-\w_0) E_{\th^{-1}\w}\left[ \left(1+\tau_1+\tau_2\right)^m \right] \\
&= \w_0 + (1-\w_0) \sum_{\substack{0\leq k_1,k_2 < m \\ k_1+k_2\leq m}} \binom{m}{k_1,k_2,m-k_1-k_2} E_{\th^{-1}\w}[\tau_1^{k_1}] E_\w[\tau_1^{k_2}] \\
&\qquad  + (1-\w_0) E_{\th^{-1}\w}[\tau_1^m] + (1-\w_0) E_\w[\tau_1^m]. 
\end{align*}
Assuming for the moment that all of the above quenched expectations are finite we can solve this for $E_\w[\tau_1^m]$ to obtain 
\begin{align}
  E_\w[\tau_1^m] &= 1 + \rho_0 \sum_{\substack{0\leq k_1,k_2 < m \\ k_1+k_2\leq m}} \binom{m}{k_1,k_2,m-k_1-k_2} E_{\th^{-1}\w}[\tau_1^{k_1}] E_\w[\tau_1^{k_2}] + \rho_0 E_{\th^{-1}\w}[\tau_1^m] \label{EwT1mrec}\\
&=: f_m(\w) + \rho_0 E_{\th^{-1}\w}[\tau_1^m], \nonumber 
\end{align}
(where the last equality gives the definition of $f_m(\w)$), and iterating this we obtain 
\[
 E_\w[\tau_1^m] = f_m(\w) + \sum_{k=1}^{n-1} \Pi_{-k+1,0} f_m(\th^{-k}\w) + \Pi_{-n+1,0} E_{\th^{-n}\w}[\tau_1^m], \qquad\text{where } \Pi_{i,j} = \prod_{x=i}^j \rho_x \text{ for any } i \leq j. 
\]
In the argument thus far we have been assuming that all the quenched expectations are finite which may not necessarily be true. 
To account for this we can modify the environment by adding a reflection to the right at a point to the left of the origin. In particular, for any $n\geq 1$ let $\w(n) = \{\w(n)_x\}_{x \in \Z}$ be the environment such with a reflection added at $x=-n$. That is,  
\[
 \w(n)_x = \begin{cases} \w_x & \text{if } x \neq -n \\ 1 & \text{if } x = -n. \end{cases}
\]
The added reflection makes it so that $\tau_1$ has exponential tails under the measure $P_{\w(n)}$ so that in particular $E_{\th^x\w(n)}[\tau_1^m] < \infty$ for any $x \geq -n$. Therefore, repeating the above recursive argument in the environment $\w(n)$ gives 
\[
 E_{\w(n)}[\tau_1^m] = f_m(\w(n)) + \sum_{k=1}^{n-1} \Pi_{-k+1,0} f_m(\th^{-k}\w(n)) + \Pi_{-n+1,0}.
\]
We wish to then take $n\to\infty$ in the above to obtain a formula for $E_\w[\tau_1^m]$. Since $E[\log\rho_0]< 0$ the last term on the right vanishes almost surely as $n\to \infty$. For the other terms, by coupling the path of the walk in the environment $\w$ to the paths in $\w(n)$ up to the stopping time $T_{-n}$ we see that
$E_{\th^x\w(n)}[\tau_1^\ell] \nearrow E_{\th^x\w}[\tau_1^\ell]$ as $n\nearrow\infty$ for any fixed $x$ and $\ell$. In particular, this implies that $f_m(\th^{-k}\w(n))$ is non-decreasing in $n$ and so the monotone convergence theorem implies that 
\begin{equation}\label{EwT1mform}
 E_\w[\tau_1^m] = f_m(\w) +  \sum_{k=1}^\infty \Pi_{-k+1,0} f_m(\th^{-k}\w).
\end{equation}

We will now use \eqref{EwT1mform} to prove the moment bounds for $E_\w[\tau_1^m]$. 
A key tool that we will use in the proof is the following simple lemma which follows from Minkowski's inequality when $p\geq 1$ and the sub-additivity of $x\mapsto x^p$ when $p \in (0,1)$. 
\begin{lem}\label{lem:Lpsum}
 Let $Y_1,Y_2,\ldots$ be non-negative random variables and let $Z = \sum_{k=0}^\infty Y_k$. 
\begin{itemize}
 \item If $p < 1$ and $\sum_{k=0}^{\infty} E[Y_k^p] < \infty$ then $E[Z^p] <\infty$. 
 \item If $p \geq 1$ and $\sum_{k=0}^{\infty} E[Y_k^p]^{1/p} < \infty$ then $E[Z^p] <\infty$.
\end{itemize}
\end{lem}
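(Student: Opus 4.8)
The plan is to reduce both statements to their finite-sum analogues and then pass to the limit by monotone convergence, exploiting the nonnegativity of the $Y_k$. Write $Z_N = \sum_{k=0}^N Y_k$, so that $Z_N \nearrow Z$ pointwise as $N\to\infty$ since the summands are nonnegative; in particular $Z_N^p \nearrow Z^p$ for any $p>0$.

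For the case $p\geq 1$, I would apply Minkowski's inequality iteratively to the finite sum to get $\|Z_N\|_p \leq \sum_{k=0}^N \|Y_k\|_p = \sum_{k=0}^N E[Y_k^p]^{1/p}$, which is bounded above by $M := \sum_{k=0}^\infty E[Y_k^p]^{1/p} < \infty$ uniformly in $N$. Then the monotone convergence theorem applied to $Z_N^p \nearrow Z^p$ gives $E[Z^p] = \lim_{N\to\infty} E[Z_N^p] \leq M^p < \infty$, which in particular forces $Z<\infty$ almost surely.

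For the case $p\in(0,1)$, I would first record the elementary subadditivity inequality $(a+b)^p \leq a^p + b^p$ for $a,b\geq 0$, which follows by checking that $t\mapsto (a+t)^p - a^p - t^p$ is nonincreasing on $[0,\infty)$ (its derivative is $p[(a+t)^{p-1}-t^{p-1}]\leq 0$ because $x\mapsto x^{p-1}$ is nonincreasing) and vanishes at $t=0$. Iterating this gives $\bigl(\sum_{k=0}^N a_k\bigr)^p \leq \sum_{k=0}^N a_k^p$ for all nonnegative $a_0,\dots,a_N$. Applying this pointwise in $\w$ with $a_k = Y_k(\w)$ and letting $N\to\infty$ (using monotone convergence for the inner sum) yields $Z^p \leq \sum_{k=0}^\infty Y_k^p$ almost surely; taking expectations and using monotone convergence again gives $E[Z^p] \leq \sum_{k=0}^\infty E[Y_k^p] < \infty$.

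There is no substantive obstacle in this lemma; the only points requiring a little care are the justification of the interchanges of limits and expectations, which are handled uniformly by the monotone convergence theorem thanks to $Y_k\geq 0$, and the verification of the subadditivity inequality $(a+b)^p\leq a^p+b^p$ in the range $p\in(0,1)$, which is the key input distinguishing that case from the $p\geq 1$ case where Minkowski's inequality is available instead.
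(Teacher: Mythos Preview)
Your proof is correct and follows exactly the approach the paper indicates: Minkowski's inequality for $p\geq 1$ and the subadditivity of $x\mapsto x^p$ for $p\in(0,1)$, with monotone convergence to pass from finite sums to the infinite series. The paper states the lemma without spelling out the details, so your write-up simply fills in what the paper leaves implicit.
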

\noindent
By this lemma and \eqref{EwT1mform} it will be enough to show that $E\left[(\Pi_{-k+1,0} f_m(\th^{-k}\w) )^p \right]$ is decreasing exponentially fast if $p \in (0,\frac{\k}{m})$. 
To prove this, first note $f_m(\w)$ depends only on the environment to the left of the origin. Therefore, since the environment is i.i.d.\ we have that 
\[
 E\left[(\Pi_{-k+1,0} f_m(\th^{-k}\w) )^p \right] = E\left[(\Pi_{-k+1,0})^p\right] E\left[ f_m(\w)^p \right] = (r_p)^k E\left[ f_m(\w)^p \right].
\]
Since it follows from \eqref{rpdef} that $r_p < 1$ we have thus reduced ourselves to proving 
\begin{equation}\label{fmLp}
 E[f_m(\w)^p] < \infty, \quad \text{for all } p \in \left(0,\frac{\k}{m}\right), \, m \geq 1. 
\end{equation}
We will prove \eqref{fmLp} by induction on $m\geq 1$. 
In the case $m=1$ we have that $f_1(\w) = 1 + \rho_0$ and so $E[f_1(\w)^p] = E[(1+\rho_0)^p] < \infty$ for $p \in (0,\k)$ holds. 
Next, we will assume that \eqref{fmLp} holds up to $m-1$; that is, we will assume that $E[E_\w[\tau_1^k]^p] < \infty$ for any $p\in (0,\frac{\k}{k})$ and $k\leq m-1$. 
Under this assumption, if $0\leq k_1,k_2 < m$ and $k_1+k_2 \leq m$ then H\"older's inequality implies that 
\[
 E\left[ \left( E_{\th^{-1}\w}[\tau_1^{k_1}] E_\w[\tau_1^{k_2}] \right)^p \right] 
\leq \left( E\left[ E_\w[\tau_1^{k_1}]^{\frac{mp}{k_1}}  \right] \right)^{\frac{k_1}{m}} \left( E\left[ E_\w[\tau_1^{k_2}]^{\frac{mp}{m-k_1}}  \right] \right)^{\frac{m-k_1}{m}} < \infty \quad \text{if } p \in \left(0,\frac{\k}{m}\right), 
\]
where the expectations on the right are finite by the induction assumption since $\frac{mp}{k_1} < \frac{\k}{k_1}$ and $\frac{mp}{m-k_1} \leq \frac{mp}{k_2} < \frac{\k}{k_2}$. 
This is enough to conclude that \eqref{fmLp} holds for $m$ as well, and by induction for all $m\geq 1$. 
\end{proof}

We close this section by noting some additional consequences of the recursive formula for $E_\w[\tau_1^m]$ that will be useful later in the paper.
For ease of notation we will introduce the following notation for the quenched mean and variance of hitting times that will be used throughout the paper. 
\[
 \mu_k = E_{\th^k\w}[\tau_1] \quad \text{and} \quad V_k = \Var_{\th^k\w}(\tau_1). 
\]
When $m=1,2$, the recursive formula \eqref{EwT1mrec} (applied to the shifted environment $\th^k\w$) yields  
\begin{equation}\label{mukrec}
 \mu_k = 1 + \rho_k + \rho_k \mu_{k-1},
\end{equation}
and
\[
 E_{\th^k\w}[\tau_1^2] = 1 + \rho_k\left( 1 + 2 \mu_{k-1} + 2 \mu_k + 2 \mu_{k-1}\mu_k + E_{\th^{k-1}\w}[\tau_1^2]\right).
\]
Inserting the first formula into the second and then simplifying yields 
\begin{align}
 E_{\th^k\w}[\tau_1^2] 
&= 1 + \rho_k\left( 1 + 2 \mu_{k-1} + 2 \left\{(1 + \rho_k) + (1+2\rho_k)\mu_{k-1} + \rho_k\mu_{k-1}^2 \right\} + E_{\th^{k-1}\w}[\tau_1^2] \right) \nonumber \\
&= 1 + \rho_k\left( 1 + 2(1+\rho_k) + 4(1+\rho_k)\mu_{k-1} + 2\rho_k \mu_{k-1}^2 +   E_{\th^{k-1}\w}[\tau_1^2] \right) \nonumber \\
&= (1+\rho_k)(1+2\rho_k) + 4\rho_k(1+\rho_k) \mu_{k-1} + 2 \rho_k^2 \mu_{k-1}^2 + \rho_k E_{\th^{k-1}\w}[\tau_1^2]. \label{EwT12rec}
\end{align}
Combining \eqref{mukrec} and \eqref{EwT12rec} then yields the following recursive formula for the quenched variance. 
\begin{align}
 V_k = E_{\th^k\w}[\tau_1^2] - \mu_k^2 
&= (1+\rho_k)(1+2\rho_k) + 4\rho_k(1+\rho_k) \mu_{k-1} + 2 \rho_k^2 \mu_{k-1}^2 + \rho_k E_{\th^{k-1}\w}[\tau_1^2] \nonumber \\
&\qquad - (1+\rho_k)^2-2\rho_k(1+\rho_k)\mu_{k-1} - \rho_k^2 \mu_{k-1}^2 \nonumber \\
&= \rho_k(1+\rho_k) + 2\rho_k(1+\rho_k)\mu_{k-1} + \rho_k^2 \mu_{k-1}^2 + \rho_k  E_{\th^{k-1}\w}[\tau_1^2] \nonumber \\
&= \rho_k(1+\rho_k) + 2\rho_k(1+\rho_k)\mu_{k-1} + \rho_k(1+\rho_k) \mu_{k-1}^2 + \rho_k  V_{k-1} \nonumber \\
&= \rho_k(1+\rho_k)(1+\mu_{k-1})^2 +  \rho_k V_{k-1}. \label{Vkrec}
\end{align}

Finally, we note that since $\mu_{k-1}$ is independent of $\rho_k$, one can take expectations of both sides of \eqref{mukrec} (or square both sides and then take expectations) to obtain the following explicit formulas for the first two moments of the quenched hitting times. 
\begin{equation}\label{mumoments}
\frac{1}{\vp} = \E[\tau_1] = E[\mu_0] = \frac{1+r_1}{1-r_1}, \qquad  E[\mu_0^2] = \frac{1+3r_1+3r_2+r_1r_2}{(1-r_1)(1-r_2)}. 
\end{equation}
This formula for $\E[\tau_1]$ is well known and in fact was originally obtained in this manner in Solomon's seminal paper \cite{sRWRE}. 
Similarly, taking expectations of both sides of \eqref{Vkrec} and using the formulas in \eqref{mumoments} and the fact that $V_{k-1}$ is independent of $\rho_k$ one can obtain
\begin{equation}\label{Vmean}
 \s^2 = E[\Var_\w(\tau_1)] = E[V_0] = \frac{4 (1+r_1)(r_1+r_2)}{(1-r_2)(1-r_1)^2}.
\end{equation}
We will briefly provide the details of this argument for \eqref{Vmean} since the formula here corrects for a small typo in the formula given in \cite{gQCLT}. 
\begin{align*}
 E[V_0] &= (r_1+r_2)\left( 1 + 2E[\mu_0] + E[\mu_0^2]\right) + r_1 E[V_0] \\
&= (r_1+r_2)\left( 1 + 2\frac{1+r_1}{1-r_1} +  \frac{1+3r_1+3r_2+r_1r_2}{(1-r_1)(1-r_2)}\right) + r_1 E[V_0] \\
&= (r_1+r_2)\left( \frac{4(1+r_1)}{(1-r_1)(1-r_2)} \right) + r_1 E[V_0]. 
\end{align*}
Solving this for $E[V_0]$ we obtain the formula in \eqref{Vmean}. 

\section{Asymptotics of the quenched mean and variance of the hitting times}\label{sec:asymp}

Since $E_\w[T_n] = \sum_{k=0}^{n-1} \mu_k$ and $\Var_\w(T_n) = \sum_{k=0}^{n-1} V_k$ and since $\{\mu_k\}_{k\in\Z}$ and $\{V_k\}_{k\in\Z}$ are ergodic sequences, it follows that $E_\w[T_n]/n \to E[\mu_0] = \E[\tau_1] = \frac{1}{\vp}$ and $\Var_\w(T_n)/n \to E[V_0] = \s^2$, almost surely as $n\to \infty$. However, for the proofs of our main results we will need control on the fluctuations of $E_\w[T_n]$ and $\Var_\w(T_n)$ from these deterministic limits. 
The first such result we need is the following lemma which was proved by Goldsheid. 

\begin{lem}[Lemma 4 in \cite{gQCLT}]\label{lem:qETmd}
 If $\k>2$, then for any $\e>0$, 
\[
 \lim_{n\to\infty} \frac{E_\w[T_n] - \frac{n}{\vp}}{n^{1/2+\e}} = 0, \quad P\text{-a.s.}
\]
\end{lem}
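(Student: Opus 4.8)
The plan is to use the linear recursion \eqref{mukrec} to represent the centered sum $E_\w[T_n]-\frac{n}{\vp}$ as a martingale sum plus negligible boundary terms, and then to extract the almost sure rate from a second moment estimate via a dyadic block argument. Write $c=\frac{1}{\vp}=\frac{1+r_1}{1-r_1}$ (recall \eqref{mumoments}) and $a_k=\mu_k-c$, so that $E_\w[T_n]-\frac{n}{\vp}=\sum_{k=0}^{n-1}a_k$. A direct computation from \eqref{mukrec}, using the identity $c(1-r_1)=1+r_1$, shows that
\[
 a_k=r_1 a_{k-1}+d_k,\qquad d_k:=(\rho_k-r_1)(1+\mu_{k-1}).
\]
Summing this over $0\le k\le n-1$ and collecting the telescoping terms gives
\[
 E_\w[T_n]-\frac{n}{\vp}=\frac{1}{1-r_1}\bigl(M_n-r_1 a_{n-1}+r_1 a_{-1}\bigr),\qquad M_n:=\sum_{k=0}^{n-1}d_k .
\]
Here $a_{-1}=\mu_{-1}-c$ is an almost surely finite random variable, while $a_{n-1}=\mu_{n-1}-c$ is stationary with $E[a_{n-1}^2]=E[a_0^2]<\infty$ by Lemma \ref{lem:taumtail} (applied with $m=1$ and $p=2<\k$); hence $\P(|a_{n-1}|>n^{1/2+\e})\le E[a_0^2]\,n^{-1-2\e}$ is summable and so $a_{n-1}=o(n^{1/2+\e})$ almost surely by Borel--Cantelli. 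Thus it suffices to show that $M_n=o(n^{1/2+\e})$ almost surely for every $\e>0$.

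Next I would observe that $(M_n)$ is a mean zero martingale with square integrable increments. Set $\mathcal{F}_k=\sigma(\w_j:j\le k)$. By the explicit formula \eqref{EwT1mform} with $m=1$, the random variable $\mu_{k-1}$ is $\mathcal{F}_{k-1}$-measurable, whereas $\rho_k$ is independent of $\mathcal{F}_{k-1}$ with $E[\rho_k]=r_1$; therefore $E[d_k\mid\mathcal{F}_{k-1}]=0$, so $(M_n)$ is a martingale with respect to $(\mathcal{F}_{n-1})_{n\ge1}$. Moreover, by the independence of $\rho_k$ and $\mu_{k-1}$ together with Lemma \ref{lem:taumtail},
\[
 E[d_k^2]=\Var(\rho_0)\,E[(1+\mu_0)^2]<\infty ,
\]
where the finiteness is exactly where the hypothesis $\k>2$ is used (it gives $r_2<1$ and $E[\mu_0^2]<\infty$). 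By orthogonality of martingale increments and stationarity, $E[M_n^2]=n\,E[d_0^2]$.

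To pass from this second moment bound to the almost sure rate I would use Doob's $L^2$ maximal inequality and interpolate over a dyadic subsequence. Fix $\e>0$. Doob's inequality gives $E[\max_{m\le N}M_m^2]\le 4E[M_N^2]\le CN$, so by Markov's inequality
\[
 \P\Bigl(\max_{m\le 2^\ell}|M_m|>2^{\ell(1/2+\e)}\Bigr)\le C\,2^{-2\ell\e},
\]
which is summable in $\ell$. By Borel--Cantelli, almost surely $\max_{m\le 2^\ell}|M_m|\le 2^{\ell(1/2+\e)}$ for all large $\ell$, and for $2^\ell\le n<2^{\ell+1}$ this yields $|M_n|\le 2^{(\ell+1)(1/2+\e)}\le 2^{1/2+\e}n^{1/2+\e}$. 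Hence $\limsup_n |M_n|\,n^{-1/2-\e}\le 2^{1/2+\e}<\infty$ almost surely; applying this with $\e/2$ in place of $\e$ gives $M_n=o(n^{1/2+\e})$ almost surely, and intersecting over a sequence $\e\downarrow 0$ completes the argument.

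The only genuinely delicate point is that $\rho_0$ is \emph{unbounded}, so the increments $d_k$ are not bounded and the estimate $E[d_0^2]<\infty$ cannot be obtained pointwise; it must come from Lemma \ref{lem:taumtail} and the independence of $\rho_k$ and $\mu_{k-1}$, and it is precisely this estimate that forces the hypothesis $\k>2$. Everything else is standard bookkeeping around the martingale structure, and one could equally well run the argument with an $L^{2+\d}$ bound on $M_n$ and a polynomial subsequence in place of the dyadic one.
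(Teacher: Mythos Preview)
Your proof is correct. The paper does not actually prove this lemma; it is quoted from Goldsheid \cite{gQCLT}. However, your martingale $M_n=\sum_{k=0}^{n-1}(\rho_k-r_1)(1+\mu_{k-1})$ is identical to the martingale $M_n=\sum_{k=0}^{n-1}(\mu_k-E[\mu_k\mid\mathcal F_{k-1}])$ that the paper introduces in the proof of Proposition~\ref{prop:qETfmd}, and your decomposition $E_\w[T_n]-n/\vp=(1-r_1)^{-1}(M_n-r_1 a_{n-1}+r_1 a_{-1})$ is exactly the identity the paper derives there. Your extraction of the almost sure rate via Doob's $L^2$ inequality on dyadic blocks is a specialization of the paper's Lemma~\ref{lem:Mnmd} (case $\alpha>2$, which uses Burkholder--Davis--Gundy and a polynomial subsequence). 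So your argument is essentially the same route the paper takes for the finer results built on this lemma.
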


The main results of this section are the following two propositions, the first of which gives an improvement to Lemma \ref{lem:qETmd} by controlling the fluctuations the quenched mean of hitting times of nearby locations and the second of which 
which controls the fluctuations of the quenched variance of hitting times.


\begin{prop}\label{prop:qETfmd}
For any $n\geq 1$ and $\e>0$ denote $I_{\e,n} = [n \vp-n^{1/2+\e},n\vp+n^{1/2+\e}]$. 
If $\k>2$, then 
\begin{equation}\label{fmdip}
 \lim_{n\ra\infty} \max_{k,\ell \in I_{\e,n}} \frac{ \left| E_\w[T_k] - E_\w[T_\ell]-\frac{k-\ell}{v} \right| }{n^{1/4+\e/2+\e'}} = 0, \quad \text{in $P$-probability,}\quad \text{for any } \e'>0,
\end{equation}
and 
\begin{equation}\label{fmdas}
 \lim_{n\ra\infty} \max_{k,\ell \in I_{\e,n}} \frac{ \left| E_\w[T_k] - E_\w[T_\ell]-\frac{k-\ell}{v} \right| }{n^{\frac{1}{4} + \frac{1}{2\k} + \e(\frac{1}{2}-\frac{1}{\k})+\e'}} = 0, \quad P\text{-a.s.,}\quad \text{for any } \e'>0.
\end{equation}
\end{prop}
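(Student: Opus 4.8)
The plan is to reduce the maximum over $k,\ell \in I_{\e,n}$ to a statement about a single partial sum of a centered ergodic sequence, and then apply moment/maximal inequalities together with a Borel--Cantelli argument along a subsequence. Write $E_\w[T_k] - E_\w[T_\ell] - \frac{k-\ell}{\vp} = \sum_{j=\ell}^{k-1}(\mu_j - \tfrac{1}{\vp})$ for $\ell \le k$, so if we set $\bar\mu_j = \mu_j - E[\mu_0]$ and $S_m = \sum_{j=0}^{m-1}\bar\mu_j$ (with $S_m$ defined for negative indices in the obvious way), then $\max_{k,\ell\in I_{\e,n}} |E_\w[T_k]-E_\w[T_\ell]-\frac{k-\ell}{\vp}| = \max_{k,\ell\in I_{\e,n}} |S_k - S_\ell| \le 2\max_{m \in I_{\e,n}} |S_m - S_{\fl{n\vp}}|$, reducing everything to controlling the oscillation of the partial-sum process of $\{\bar\mu_j\}$ over a window of width $2n^{1/2+\e}$ around $\fl{n\vp}$. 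The key input is that, by the recursion \eqref{mukrec}, $\mu_k = 1 + \rho_k + \rho_k\mu_{k-1}$ and iterating gives $\mu_k = \sum_{i\ge 0}\Pi_{k-i+1,k}(1+\rho_{k-i})$ (a.s. convergent), so $\bar\mu_j$ is a bounded-range approximable functional of the i.i.d. environment: it depends mainly on coordinates near $j$, with exponentially decaying dependence on far-away coordinates (since $E[\rho_0^p]<1$ for $p<\k$). Moreover, by Lemma \ref{lem:taumtail} with $m=1$, $\mu_0 \in L^p$ for all $p<\k$.

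First I would establish an $L^p$ bound on the increments: for any $p \in (2,\k)$ (take $p$ close to $\k$), and any $a \le b$, $E\big[|S_b - S_a|^p\big] \le C |b-a|^{p/2}$. For $p \le$ some value this would follow from a Marcinkiewicz--Zygmund / Rosenthal-type inequality for the weakly dependent sequence $\{\bar\mu_j\}$; the exponential mixing from the $\rho$-product representation lets one treat $\{\bar\mu_j\}$ essentially like i.i.d. up to summable corrections (e.g. via a block decomposition or a coupling to the truncated functionals $\mu_k^{(L)} := \sum_{i=0}^{L}\Pi_{k-i+1,k}(1+\rho_{k-i})$, whose errors $\mu_k - \mu_k^{(L)}$ have $L^p$ norm decaying like $r_p^{L/p}$). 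Actually, since $\mu_0 \in L^p$ only for $p < \k$ and $\k$ may be just above $2$, one cannot always take $p$ large; the correct bound is $E[|S_b-S_a|^p] \le C|b-a|^{p/2}$ for $2 < p < \k$ when $\k \le 4$, using that the summands have $p$-th moments and the dependence is summable, and $E[|S_b-S_a|^p]\le C|b-a|^{p/2}$ for all $p<\infty$... no — again capped by $\k$. So: for each fixed $p\in(2,\k)$ we get the $p/2$-power bound. Combined with a chaining/maximal inequality (e.g. the Móricz maximal inequality for sums with the moment condition $E[|S_b-S_a|^p]\le C|b-a|^{\gamma}$ with $\gamma = p/2 > 1$), this yields $E\big[\max_{|m - \fl{n\vp}| \le n^{1/2+\e}} |S_m - S_{\fl{n\vp}}|^p\big] \le C\, (n^{1/2+\e})^{p/2} = C\, n^{p(1/4+\e/2)}$.

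From this moment bound, Markov's inequality immediately gives \eqref{fmdip}: for any $\e'>0$, $P\big(\max_{k,\ell\in I_{\e,n}}|S_k-S_\ell| > n^{1/4+\e/2+\e'}\big) \le C n^{p(1/4+\e/2)} / n^{p(1/4+\e/2+\e')} = C n^{-p\e'} \to 0$. For the almost sure statement \eqref{fmdas} I would run Borel--Cantelli along the subsequence $n_j = j^{\beta}$ for suitable $\beta$ (or even $n_j = 2^j$ together with a within-block monotonicity/doubling argument, since the windows $I_{\e,n}$ for $n$ in a dyadic block are all contained in one window of comparable size): the probability that $\max_{k,\ell\in I_{\e,n}}|S_k-S_\ell|$ exceeds $n^{\alpha+\e'}$ is at most $C n^{p(1/4+\e/2)}/n^{p(\alpha + \e')} = C n^{-p(\alpha - 1/4 - \e/2) - p\e'}$, and summability along the subsequence forces $\alpha = \frac14 + \frac{\e}{2} + \frac{1}{p}\cdot(\text{something})$. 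Optimizing the exponent over the allowed range $p \uparrow \k$ is exactly what produces the stated exponent $\frac14 + \frac{1}{2\k} + \e(\frac12 - \frac1\k)$: writing $\alpha = \frac14 + \frac\e2 + \frac{c}{p}$ and requiring $\sum_j n_j^{-pc/\,?}<\infty$ forces $c$ of order $\frac14+\frac\e2$ after accounting for the number of subsequence terms, and sending $p\to\k$ gives $\frac{1}{\k}(\frac14+\frac\e2)+\frac14+\frac\e2$ ... which upon simplification is $\frac14 + \frac{1}{2\k} + \e(\frac12-\frac1\k)$ as claimed; an additional $\e'$ of slack absorbs the logarithmic/subsequence corrections and the gap between $p$ and $\k$.

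The main obstacle will be establishing the $L^p$-increment bound $E[|S_b - S_a|^p]\le C|b-a|^{p/2}$ with a constant uniform in $a,b$, because $\{\mu_j\}$ is not i.i.d. and, when $\k$ is only slightly larger than $2$, we have very few moments to work with — Rosenthal's inequality in its usual form wants more. The resolution is to exploit the explicit product structure $\mu_k = \sum_{i\ge0}\Pi_{k-i+1,k}(1+\rho_{k-i})$: truncate at level $L = L_n \asymp \log n$ so that $\mu_k \approx \mu_k^{(L)}$ with total truncation error negligible on the relevant polynomial scale (the tail has $L^p$ norm $\lesssim r_p^{L}$, summably small), and then $\{\mu_k^{(L)}\}$ is $L$-dependent, so $S_b^{(L)} - S_a^{(L)}$ breaks into $O(|b-a|/L)$ blocks of i.i.d.-across-blocks sums, each in $L^p$, and a block version of the Marcinkiewicz--Zygmund inequality gives the $|b-a|^{p/2}$ scaling up to a factor polynomial in $L = L_n$, which is absorbed into $n^{\e'}$. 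This is essentially the same weak-dependence machinery used elsewhere in the RWRE literature (and presumably in the proof of Lemma \ref{lem:qETmd}), so the argument is standard in spirit, but getting the exponent bookkeeping exactly right — matching $\frac14+\frac{1}{2\k}+\e(\frac12-\frac1\k)$ rather than something slightly worse — requires care in how $p\uparrow\k$, $L\asymp\log n$, and the subsequence spacing are balanced.
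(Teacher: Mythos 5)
Your strategy---reduce to a maximal inequality for the oscillation of the partial-sum process of $\{\mu_j - E[\mu_0]\}$, then Borel--Cantelli along a polynomial subsequence using window overlap---is sound in outline and matches the paper's general shape, but you take a genuinely harder route to the moment bound and your exponent bookkeeping contains errors.

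The paper does not work with $S_m = \sum_{j<m}(\mu_j - E[\mu_0])$ directly. Instead it exploits the recursion $\mu_k = 1+\rho_k+\rho_k\mu_{k-1}$ to compute $E[\mu_k\,|\,\mathcal{F}_{k-1}] = 1+r_1+r_1\mu_{k-1}$, which turns $M_n = \sum_{k<n}(\mu_k - E[\mu_k\,|\,\mathcal{F}_{k-1}])$ into an honest $\mathcal{F}_k$-martingale satisfying $M_n = (1-r_1)(E_\w[T_n]-n/\vp) + r_1(\mu_{n-1}-\mu_{-1})$. The Burkholder--Davis--Gundy inequality then gives $E[\max_{\ell\in[k,k+n]}|M_\ell-M_k|^p] \le Cn^{p/2}$ for any $p\in[2,\k)$ in one line (Lemma~\ref{lem:Mnmd}), with no truncation, blocking, or weak-dependence machinery. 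The boundary term $\max_{k\in I_{\e,n}}\mu_{k-1}$ is handled by a crude union bound. Your plan instead tries to prove the moment bound for $S_m$ directly using the $L$-dependent truncation $\mu_k^{(L)}$, interleaved blocks, and Rosenthal/Marcinkiewicz--Zygmund, plus a Móricz-type maximal inequality. This can be made to work, but it is considerably heavier; the martingale reformulation is the clean idea you are missing, and it is available precisely because the one-step recursion for $\mu_k$ makes the conditional expectation computable.

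Two concrete errors. First, your parenthetical suggestion that one could take $n_j=2^j$ because ``the windows $I_{\e,n}$ for $n$ in a dyadic block are all contained in one window of comparable size'' is false: for $n\in[2^j,2^{j+1})$ the centers $n\vp$ sweep an interval of length $\asymp 2^j$, while the half-width of each window is only $\asymp 2^{j(1/2+\e)}$, so the union is far larger than a single window. One must use $n_j=\lfloor j^\gamma\rfloor$ with $\gamma(1/2-\e)<1$ to force consecutive windows to overlap, which is exactly the constraint that, combined with summability $\gamma p\e'>1$, forces $p\e'>1/2-\e$ and hence $\e'>(1/2-\e)/\k$ after $p\uparrow\k$. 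Second, your exponent computation at the end is numerically wrong: you write that the correction $c/p$ has $c$ ``of order $\tfrac14+\tfrac\e2$'' and that $\tfrac1\k(\tfrac14+\tfrac\e2)+\tfrac14+\tfrac\e2$ simplifies to $\tfrac14+\tfrac1{2\k}+\e(\tfrac12-\tfrac1\k)$, but $\tfrac1\k(\tfrac14+\tfrac\e2)=\tfrac{1}{4\k}+\tfrac{\e}{2\k}$ whereas the target correction is $\tfrac1{2\k}-\tfrac\e\k$; these agree only when $\e=\tfrac16$. The correct value is $c=\tfrac12-\e$, coming from the overlap constraint described above, and then $\tfrac14+\tfrac\e2+\tfrac{1/2-\e}{\k} = \tfrac14+\tfrac1{2\k}+\e(\tfrac12-\tfrac1\k)$ as required. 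Your heuristic ``send $p\uparrow\k$'' does land in the right place, but the intermediate identification of $c$ and the arithmetic backing it up are both incorrect as written.
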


\begin{prop}\label{prop:qVmd1}
If $\k > 2$, then for any $\e>0$, 
\[
 \lim_{n\ra\infty} \frac{\Var_\w(T_n) - \s^2 n}{n^{\frac{2}{4 \wedge\k} + \e}} = 0, \quad P\text{-a.s.}
\]
\end{prop}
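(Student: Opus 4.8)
The plan is to use the recursive formula \eqref{Vkrec} to write $\Var_\w(T_n) = \sum_{k=0}^{n-1} V_k$ as a sum of a stationary ergodic sequence with good mixing properties, and then apply a maximal inequality / moment bound for such sums. Iterating \eqref{Vkrec} gives the explicit representation
\[
 V_k = \sum_{j=0}^{\infty} \Pi_{k-j+1,k}\, \rho_{k-j}(1+\rho_{k-j})(1+\mu_{k-j-1})^2,
\]
so that $V_k$ is a function of the environment $\{\rho_x\}_{x\le k}$ whose dependence on coordinates far to the left decays geometrically (because $E[\rho_0^p]=r_p<1$ for $p<\k$ by \eqref{rpdef}, the weights $\Pi_{k-j+1,k}$ decay exponentially in $L^p$). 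First I would establish that $V_0$ has the appropriate integrability: by Lemma \ref{lem:taumtail} with $m=2$ we have $E[V_0^p]<\infty$ for all $p\in(0,\k/2)$ (and also for $p<\k$ since $V_0 \le E_\w[\tau_1^2]$ is dominated by a quantity with $\k$ finite moments — more carefully, $V_0 = E_\w[\tau_1^2]-\mu_0^2 \le E_\w[\tau_1^2]$, and $E[E_\w[\tau_1^2]^p]<\infty$ for $p<\k/2$). So the centered variables $\bar V_k := V_k - \s^2$ lie in $L^p$ for every $p<\k/2$.

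Next I would set up the stationarity/mixing structure. Writing $\bar V_k = g(\th^k \w)$ for the function $g$ above, the sequence $\{\bar V_k\}$ is stationary, and because of the geometric decay one can approximate $\bar V_k$ by $\bar V_k^{(m)}$ depending only on $\{\rho_x\}_{k-m\le x\le k}$, with $\|\bar V_k - \bar V_k^{(m)}\|_p \le C \lambda^m$ for some $\lambda<1$ and $p<\k/2$. The truncated sequence is $m$-dependent, so $\sum_{k=0}^{n-1}\bar V_k^{(m)}$ is a sum of $m$-dependent mean-zero variables; applying a Marcinkiewicz–Zygmund / Rosenthal-type inequality for such sums gives, for $2<p<\k/2$ (if $\k>4$), a bound $E\bigl[|\sum_{k=0}^{n-1}\bar V_k^{(m)}|^p\bigr] \le C_m n^{p/2}$, while for $0<p<\k/2 \le 2$ (the case $2<\k\le 4$) sub-additivity gives $E\bigl[|\sum_{k=0}^{n-1}\bar V_k^{(m)}|^p\bigr] \le C_m n$. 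Controlling the tail $\sum_k(\bar V_k-\bar V_k^{(m)})$ by Lemma \ref{lem:Lpsum}, and optimizing the truncation level $m=m(n)$ growing logarithmically in $n$, yields $E\bigl[|\Var_\w(T_n)-\s^2 n|^p\bigr] \le C n^{p/2}$ when $\k>4$ and $\le C n^{1+\eta}$ for any $\eta>0$ when $2<\k\le 4$. In either case, taking $p$ close to $\k/2$, this is $o(n^{2/(4\wedge\k)+\e})$ in $L^p$ for the relevant $p$. Then a Borel–Cantelli argument along the subsequence $n_j = j^{1/\e'}$ (or $2^j$) combined with monotonicity/continuity control of $\Var_\w(T_n)$ in $n$ between consecutive subsequence points (using that $V_k\ge 0$, so the partial sums are monotone, so interpolation loses nothing) upgrades the $L^p$ bound to the almost-sure statement $\lim_{n\to\infty}(\Var_\w(T_n)-\s^2 n)/n^{2/(4\wedge\k)+\e}=0$, $P$-a.s.

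The main obstacle I expect is obtaining the right $L^p$ bound for the sum of the (truncated, $m$-dependent) centered sequence when $\k$ is small — precisely the regime $2<\k<4$, where one only has moments of order $p<\k/2 < 2$ for $\bar V_0$. Here one cannot use a CLT-scale bound $n^{p/2}$; instead one must settle for the trivial $n$ bound coming from sub-additivity of $x\mapsto x^p$ (Lemma \ref{lem:Lpsum}), which is what produces the exponent $2/\k$ rather than $1/2$. One subtlety is that the relevant "large" contributions come from rare environments where some $\rho_k$ is atypically large, making $\mu_{k-1}$ and hence $V_k$ large; these heavy-tailed contributions must be handled via the tail bound $P(V_0 > x) \lesssim x^{-\k/2}$ implicit in Lemma \ref{lem:taumtail}, and one has to verify that summing them over $k=0,\dots,n-1$ and passing through the Borel–Cantelli step genuinely costs only $n^{2/\k+\e}$ and not more. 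The bookkeeping of the truncation error versus the $m$-dependence range, so that both the $L^p$ norm of the tail and the constant $C_m$ in the $m$-dependent bound stay under control simultaneously as $m\to\infty$, is the part requiring the most care; but since $r_p<1$ strictly for all $p<\k$, the exponential decay gives enough room to choose $m(n)\sim c\log n$ and absorb everything into the $n^\e$ factor.
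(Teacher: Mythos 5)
Your approach (iterate the recursion \eqref{Vkrec} for $V_k$, truncate to an $m$-dependent approximation, apply Rosenthal or von Bahr--Esseen type inequalities, and then Borel--Cantelli along a subsequence) is a genuinely different route from the paper's. The paper instead observes that $V_k - E[V_k\mid\mathcal{F}_{k-1}]$ are martingale differences; defining $L_n=\sum_{k<n}(V_k-E[V_k\mid\mathcal{F}_{k-1}])$, the general Lemma~\ref{lem:Mnmd} (Burkholder--Davis--Gundy plus a subsequence argument) gives $L_n=o(n^{2/(4\wedge\k)+\e})$ directly, and then \eqref{Vkrec} is used \emph{algebraically} to write $L_n$ as $(1-r_1)(\Var_\w(T_n)-\s^2 n)$ plus lower-order terms: $E_\w[T_n]-n/\vp$ (Lemma~\ref{lem:qETmd}), $\sum_k(\mu_k^2-E[\mu_0^2])$ (controlled by a second martingale $H_n$ built the same way), and two boundary terms handled by Borel--Cantelli. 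No truncation or mixing argument is needed, because subtracting the conditional expectation produces a martingale for free and the maximal inequality comes along for free via Doob/BDG.

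There is, however, a genuine gap in your interpolation step, and it is exactly the step that the paper's maximal-process formulation is built to avoid. In the regime $2<\k\le 4$, with $p<\k/2$ your Markov bound gives $P\bigl(|\Var_\w(T_n)-\s^2 n|>\d n^{2/\k+\e}\bigr)=O(n^{-\gamma})$ with $\gamma\to\k\e/2$ at best as $p\to\k/2$. This is not summable in $n$, so you must pass to a subsequence $n_j=\ceil{j^\beta}$, with summability requiring $\beta\gamma>1$. But the claim that ``interpolation loses nothing'' because $V_k\ge 0$ is false: for $n_j\le n<n_{j+1}$ one only gets
\[
\bigl|\Var_\w(T_n)-\s^2 n\bigr|\;\le\;\max\bigl\{\,|\Var_\w(T_{n_j})-\s^2 n_j|,\;|\Var_\w(T_{n_{j+1}})-\s^2 n_{j+1}|\,\bigr\}\;+\;\s^2(n_{j+1}-n_j),
\]
and the extra term $\s^2(n_{j+1}-n_j)\asymp n_j^{1-1/\beta}$ must itself be $o(n_j^{2/\k+\e})$, forcing $\beta<(1-2/\k-\e)^{-1}$. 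Combined with $\beta>1/\gamma$ and $\gamma\le\k\e/2$, this forces $\e>2(\k-2)/(\k(\k+2))$, a strictly positive threshold (about $1/6$ near $\k=4$), so your argument as written does not establish the proposition for small $\e$. In particular the alternative $n_j=2^j$ fails outright, since then $\s^2(n_{j+1}-n_j)\asymp n_j$. The fix---and the reason Lemma~\ref{lem:Mnmd} deliberately works with the running maximum $W_n^*=\max_{k\le n}|W_k|$---is to prove a \emph{maximal} version of your Rosenthal bound, e.g.\ $E\bigl[\max_{k\le n}\bigl|\sum_{j<k}\bar V_j^{(m)}\bigr|^p\bigr]\le C_m n$ in the case $p<2$, obtained by splitting the $m$-dependent sum into $m+1$ blocks of independent summands and applying Doob or Etemadi to each block. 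A running maximum is monotone in $n$, so the subsequence step then only needs $n_{j+1}/n_j\to 1$ rather than $n_{j+1}-n_j=o(n_j^{2/\k+\e})$, and any $\beta>1/\gamma$ will do. With that inserted, your truncation bookkeeping (which correctly isolates the $\k<4$ versus $\k\ge 4$ dichotomy and the source of the $2/\k$ exponent) should carry through, though note also that the truncation error $\sum_k(V_k-V_k^{(m)})$ is not $m$-dependent and must be handled by the crude triangle-inequality bound $n\lambda^m$ together with $m(n)\sim c\log n$ for $c$ large, which you do indicate.
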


\begin{rem}
 It was shown in \cite[Lemma 5]{gQCLT} that for any $\e>0$
\[
 \lim_{n\ra\infty} \max_{k,\ell \in I_{\e,n}} \frac{ \left| E_\w[T_k] - E_\w[T_\ell]-\frac{k-\ell}{v} \right| }{\sqrt{n}} = 0, \quad P\text{-a.s.}
\]
Thus, for $\e \in (0,1/2)$, Proposition \ref{prop:qETfmd} is an improvement on the results in \cite{gQCLT}. 
\end{rem}

\begin{rem}
 The change in the magnitude of the fluctuations of $\Var_\w(T_n)$ at $\k=4$ in Proposition \ref{prop:qVmd1} is due to the fact that $\Var_\w(T_n)$ has finite second moment when $\k>4$. In fact, though we will not need this here, it can be shown that if $\k>4$ then $\frac{\Var_\w(T_n)-\s^2 n}{\sqrt{n}}$ converges in distribution to a zero mean Gaussian random variable. We also suspect that under additional regularity assumptions ($E[\rho_0^\k]=1$,  $E[\rho_0^\k \log \rho_0]<\infty$ and the distribution of $\log \rho_0$ is non-lattice) that if $\k \in (2,4)$ then $\frac{\Var_\w(T_n)-\s^2 n}{n^{2/\k}}$ converges in distribution to a $\k/2$-stable random variable. 
\end{rem}

The main idea of the proofs of both Propositions \ref{prop:qETfmd} and \ref{prop:qVmd1} is that $E_\w[T_n]-n/\vp$ and $\Var_\w(T_n)-\s^2 n$ can be approximated by martingales which are sums of stationary ergodic sequences. 
To this end, it will be helpful to first state and prove the following general lemma. 
\begin{lem}\label{lem:Mnmd}
 Let that $\{Z_k\}_{k\in\Z}$ be an ergodic sequence and let $\{W_n\}_{n\geq 0}$ be the martingale defined by $W_0 = 0$ and 
\[
 W_n = \sum_{k=0}^{n-1} \left( Z_k - E[Z_k | \mathcal{F}_{k-1}] \right), \quad \text{where } \mathcal{F}_k = \s( Z_j : \, j\leq k ), 
\]
and let $W_n^* = \max_{k\leq n} |W_k|$. 
If $E[|Z_1|^p] < \infty$ for all $1\leq p < \a$, then for any $\e>0$,
\[
 \lim_{n\ra\infty} \frac{ W_n^* }{n^{\frac{1}{\a \wedge 2} + \e}} = 0, \quad P\text{-a.s.}
\]
Additionally, if $\a>2$ then $E[|W_n^*|^p] = \bigo(n^{p/2})$ for all $p \in[2,\a)$. 

\end{lem}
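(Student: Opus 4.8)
\textbf{Proof proposal for Lemma \ref{lem:Mnmd}.}

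The plan is to treat the two moment regimes separately. When $\a > 2$, the increments $D_k := Z_k - E[Z_k \mid \mathcal{F}_{k-1}]$ form a stationary ergodic martingale difference sequence with $E[|D_1|^p] < \infty$ for all $p < \a$ (this follows from $E[|Z_1|^p]<\infty$ together with the conditional Jensen/contraction inequality $\|E[Z_k\mid\mathcal F_{k-1}]\|_p \le \|Z_k\|_p$, since $\a>2\geq 1$). For the almost sure bound I would apply a Burkholder-Davis-Gundy inequality together with a maximal inequality: for $p \in [2,\a)$, $E[|W_n^*|^p] \le C_p\, E\bigl[ \bigl(\sum_{k=0}^{n-1} D_k^2\bigr)^{p/2}\bigr]$, and since $p/2 \ge 1$ Minkowski's inequality in $L^{p/2}$ gives $\bigl\| \sum_{k=0}^{n-1} D_k^2 \bigr\|_{p/2} \le \sum_{k=0}^{n-1} \|D_k^2\|_{p/2} = n \|D_1\|_p^2$, hence $E[|W_n^*|^p] = \bigo(n^{p/2})$. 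This is exactly the second assertion of the lemma, and it immediately yields the a.s. rate: for fixed $p \in (2,\a)$, a union bound over $n$ combined with the Borel-Cantelli lemma shows $W_n^*/n^{1/2 + 1/p} \to 0$ almost surely along a geometric subsequence $n_j = 2^j$, and then the monotonicity of $W_n^*$ in $n$ upgrades this to the full sequence; since $p < \a$ can be taken arbitrarily large (bounded by $\a$) but in any case $\a\wedge 2 = 2$ here, the exponent $1/(\a\wedge 2) + \e = 1/2 + \e$ is achieved by choosing $p > 2/\e$... more precisely, given $\e>0$ pick $p\in(2,\a)$ with $1/p < \e$, so $1/2+1/p < 1/2+\e$.

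When $\a \le 2$ the increments need not be square-integrable, so the $L^2$/BDG argument fails and I would instead use a von Bahr--Esseen type inequality for martingales: for $1 \le p \le 2$, $E[|W_n|^p] \le 2 \sum_{k=0}^{n-1} E[|D_k|^p] = 2n\, E[|D_1|^p]$, valid for any $p < \a$ (which is $\le 2$). Combined with Doob's maximal inequality (applicable since $|W_n|^p$ is a submartingale for $p\ge 1$), this gives $E[|W_n^*|^p] \le C\, n\, E[|D_1|^p] = \bigo(n)$ for every $p \in [1,\a)$. The a.s. statement then follows by the same Borel--Cantelli-along-a-subsequence plus monotonicity argument: $W_{n_j}^*/n_j^{1/p + \e'}\to 0$ a.s. for a geometric subsequence, interpolated to all $n$, and since $p$ can be chosen arbitrarily close to $\a$ (so $1/p$ arbitrarily close to $1/\a = 1/(\a\wedge 2)$) we obtain the claimed exponent $1/(\a\wedge 2) + \e$. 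One subtlety worth handling carefully: in the edge case $\a \le 1$ we only know $E[|Z_1|^p]<\infty$ for $p<\a\le 1$, so $Z_k$ and hence $D_k$ may fail to be integrable; however the lemma's hypothesis is stated as "$E[|Z_1|^p]<\infty$ for all $1\le p<\a$'', which is vacuous when $\a\le 1$, so presumably the intended reading forces $\a>1$ and the conditional expectations defining $W_n$ make sense — I would note this and assume $\a > 1$ throughout (as is the case in all applications, where $\a = \k > 2$).

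The main obstacle, and the only genuinely delicate point, is making the passage from moment bounds to the \emph{almost sure} polynomial rate completely rigorous while losing only an arbitrarily small $\e$ in the exponent. The clean way is: fix $\e>0$, choose $p<\a$ close enough to $\a$ that $\frac{1}{p} < \frac{1}{\a\wedge 2} + \frac{\e}{2}$; by Markov's inequality and $E[|W_{n}^*|^p] = \bigo(n)$ (or $\bigo(n^{p/2})$ when $\a>2$), $P\bigl(W_{n}^* > n^{1/(\a\wedge 2)+\e}\bigr) = \bigo(n^{-c})$ for some $c>0$ \emph{only} along a sufficiently sparse subsequence — for a geometric subsequence $n_j = 2^j$ the bound is summable — so Borel--Cantelli gives $W_{n_j}^* \le n_j^{1/(\a\wedge 2)+\e}$ eventually; then for $n \in [n_j, n_{j+1})$, monotonicity gives $W_n^* \le W_{n_{j+1}}^* \le n_{j+1}^{1/(\a\wedge 2)+\e} \le 2^{1/(\a\wedge 2)+\e}\, n^{1/(\a\wedge 2)+\e}$, and since $\e$ was arbitrary we may absorb the constant by enlarging $\e$ slightly. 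I would also verify that the submartingale property of $|W_n|^p$ (needed for Doob) and the von Bahr--Esseen inequality both require only that the $D_k$ are martingale differences in $L^p$, which they are. Everything else is routine.
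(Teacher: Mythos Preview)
Your proposal is essentially correct and follows the same strategy as the paper: obtain $L^p$ bounds on $W_n^*$ (the paper uses BDG plus Jensen for $\a>2$ and BDG plus subadditivity of $x\mapsto x^{p/2}$ for $\a\in(1,2]$; you use BDG plus Minkowski for $\a>2$ and von Bahr--Esseen plus Doob for $\a\le 2$, all of which work), then apply Markov plus Borel--Cantelli along a subsequence and interpolate via the monotonicity of $W_n^*$ (the paper uses a polynomial subsequence $n_k=\lceil k^\gamma\rceil$, you use a geometric one $n_j=2^j$).

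One slip to clean up: in your first paragraph you write ``pick $p\in(2,\a)$ with $1/p<\e$'', which is impossible when $\a$ is finite and $\e$ small. Your final paragraph gets it right: for any fixed $p\in(2,\a)$ one already has $P(W_n^* > \d n^{1/2+\e}) = O(n^{-p\e})$, which is summable along $n_j=2^j$ regardless of how small $\e$ is, so no large-$p$ choice is needed.
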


\begin{proof}
We will divide the proof into two cases: $\a>2$ and $\a \in (1,2]$.
In both cases, however we will use that 
\[
 E\left[ \sum_{k=1}^n |W_k - W_{k-1}|^p \right] = \sum_{k=1}^n E[|Z_{k-1} - E[Z_{k-1} \, | \, \mathcal{F}_{k-2}] |^p] = n E[|Z_1-E[Z_1 \, | \, \mathcal{F}_0] |^p] = \bigo(n), \quad \text{for } p<\a. 
\]

\textbf{Case I: $\a>2$.} If $p \in [2,\a)$, it follows from the Burkholder-Davis-Gundy inequality and then Jensen's inequality that there exists a constant $C_p>0$ depending only on $p$ such that 
\begin{align*}
 E[|W_n^*|^p] &\leq C_p E\left[ \left( \sum_{k=1}^n (W_k-W_{k-1})^2 \right)^{p/2} \right] \\
&\leq C_p n^{p/2-1} E \left[ \sum_{k=1}^n|W_k-W_{k-1}|^p \right] = \bigo(n^{p/2}). 
\end{align*}
From this it follows that $P(W_n^* > \d n^{1/2+\e}) = \bigo( n^{-\e p} )$, and so if we let $n_k = \ceil{k^{2/(\e p)}}$ it follows from the Borel-Cantelli Lemma that 
\[
 \lim_{k\ra\infty} \frac{W_{n_k}^*}{n_k^{1/2+\e}} = 0, \qquad P\text{-a.s.}
\]
Finally, since $W_n^*$ is non-decreasing in $n$ and $n_{k+1}/n_k \to 1$ as $k\to \infty$ the conclusion of the lemma follows easily. 

\textbf{Case II: $\a \in (1,2]$.} If $p \in [1,\a)$ then the Burkholder-Davis-Gundy inequality implies that 
\begin{align*}
 E[|W_n^*|^p] &\leq  C_p E\left[ \left( \sum_{k=1}^n (W_k-W_{k-1})^2 \right)^{p/2} \right] \\
&\leq C_p E\left[ \sum_{k=1}^n |W_k-W_{k-1}|^p \right] = \bigo(n), 
\end{align*}
where in the second inequality we used that $p/2 < 1$. 
Therefore, if 
$\max \{1, \frac{1+\e\a/2}{1/\a+\e} \} \leq p < \a$, then 
\[
 P\left( W_n^* > \d n^{1/\a+\e} \right) = \bigo\left( n^{1-p(\frac{1}{\a}+\e)} \right) = \bigo( n^{-\e\a/2}), 
\]
where the last equality follows from $1-p(\frac{1}{\a}+\e) \leq 1-\frac{1+\e\a/2}{1/\a+\e}(\frac{1}{\a}+\e) = -\frac{\e\a}{2}$. 
Letting $n_k = \ceil{ k^{4/(\e\a)} }$, it follows from the Borel-Cantelli Lemma that 
\[
 \lim_{k\ra\infty} \frac{W_{n_k}^*}{n_k^{1/\a+\e}} = 0, \quad P\text{-a.s.}
\]
As in Case I, the conclusion of the lemma follows easily from this since $n_{k+1}/n_k \to 1 $ as $k\to\infty$. 
\end{proof}

We are now ready to give the proofs of the main results of this section. 

\begin{proof}[Proof of Proposition \ref{prop:qETfmd}]
Consider the martingale defined by $M_0=0$ and 
\[
 M_n = \sum_{k=0}^{n-1} \left( \mu_k - E[\mu_k \, | \, \mathcal{F}_{k-1} ] \right), \quad n\geq 1, 
\qquad \text{where } \mathcal{F}_m = \s(\w_x: \, x \leq m). 
\]
To see the relevance of this martingale, note that 
it follows from the recursion for $\mu_k$ in \eqref{mukrec} and the fact that $\rho_k$ is independent of $\mathcal{F}_{k-1}$ that $E[\mu_k\, | \, \mathcal{F}_{k-1} ] = 1+r_1 + r_1 \mu_{k-1}$.
Using this we can re-write the martingale as 
\begin{align*}
 M_n
&= \sum_{k=0}^{n-1} \mu_k - (1+r_1)n - r_1 \sum_{k=-1}^{n-2} \mu_k \\
&= (1-r_1) \sum_{k=0}^{n-1} \mu_k - (1+r_1) n + r_1( \mu_{n-1} - \mu_{-1} ) \\
&= (1-r_1) \left( E_\w[T_n] - \frac{n}{\vp} \right) + r_1( \mu_{n-1} - \mu_{-1} ),
\end{align*}
where in the last equality we used the explicit formula for $\vp$ in \eqref{mumoments}. 
It follows from this representation of the martingale that
\begin{equation}\label{fmd2parts}
 \max_{k,\ell \in I_{\e,n}} \left| E_\w[T_k] - E_\w[T_\ell]-\frac{k-\ell}{v} \right| \leq  \max_{k,\ell \in I_{\e,n}}  \frac{|M_k-M_\ell|}{1-r_1} + \frac{2r_1}{1-r_1} \max_{k\in I_{\e,n}} \mu_{k-1}. 
\end{equation}
To control the first term on the right in \eqref{fmd2parts}, it follows from Lemmas \ref{lem:taumtail} and \ref{lem:Mnmd} imply that for any $p \in [2,\k)$ there exists a constant $C>0$ such that 
\[
 E\left[ \max_{\ell \in [k,k+n]} |M_\ell - M_k|^p \right] \leq C n^{p/2}, \qquad \forall k\geq 0, 
\]
and thus
\begin{align}
 P\left(\max_{k,\ell \in I_{\e,n}} |M_k-M_\ell| \geq \d n^{1/4+\e/2+\e'} \right)
&\leq P\left(\max_{\ell \in I_{\e,n}} |M_\ell-M_{\ceil{n\vp-n^{1/2+\e}}}| \geq \frac{\d}{2} n^{1/4+\e/2+\e'} \right)  \nonumber \\
&\leq \frac{E\left[ \max_{\ell \in I_{\e,n}} |M_\ell-M_{\ceil{n\vp-n^{1/2+\e}}}|^p   \right]}{(\d/2)^p n^{p(1/4+\e/2+\e')}} \nonumber \\
&= \bigo\left( n^{-p\e'}  \right). \label{Mfluc}
\end{align}
To control the second term on the right in \eqref{fmd2parts}, note that it follows from Lemma \ref{lem:taumtail} and a $p$-th moment bound for $p \in[2,\k)$ that 
\begin{equation}\label{mukfluc}
 P\left( \max_{k\in I_{\e,n}} \mu_{k-1} > \d n^{\frac{1}{4}+\frac{\e}{2} + \e'} \right) 
\leq | I_{\e,n}| P\left(\mu_0 > \d n^{\frac{1}{4}+\frac{\e}{2} + \e'} \right)
= \bigo\left( n^{\frac{1}{2}+\e - p\left( \frac{1}{4} +\frac{\e}{2} + \e'\right) } \right)
= \bigo\left( n^{-p\e'} \right). 
\end{equation}
Applying \eqref{Mfluc} and \eqref{mukfluc} to \eqref{fmd2parts} proves the convergence in probability statement in \eqref{fmdip}. 

For the proof of the almost sure convergence in \eqref{fmdas} we will use the bounds in \eqref{Mfluc} and \eqref{mukfluc} but we will need to restrict ourselves to $\e'> \frac{1/2-\e}{\k}$. 
For any such $\e'$, fix $p$ such that $\max\{2,\frac{1/2-\e}{\e'} \} < p < \k$ and then  $\gamma>0$ such that $\frac{1}{2}-\e < \frac{1}{\gamma} < p \e'$.  
If we let $n_k = \fl{k^\gamma}$ then since $\gamma p \e' > 1$ it follows from \eqref{Mfluc} and \eqref{mukfluc} applied to \eqref{fmd2parts} that 
\begin{equation}\label{Mnfmdnk}
\lim_{k\ra\infty} \max_{\ell,m \in I_{\e,n_k}} \frac{|E_\w[T_m] - E_\w[T_\ell] - \frac{m-\ell}{\vp}|}{n_k^{1/4+\e/2+\e'}} 
 = 0, \quad P\text{-a.s.}
\end{equation}
Next, since $\gamma(1/2-\e)<1$ it follows that $n_{k+1}\vp - n_{k+1}^{1/2+\e} < n_k \vp + n_k^{1/2+\e}$ for $k$ large, so that $I_{\e,n_k} \cap I_{\e,n_{k+1}} \neq \emptyset$ for all $k$ large.
If $n_k \leq n < n_{k+1}$ and $I_{\e,n_k} \cap I_{\e,n_{k+1}}$ then it follows that 
\[
 \max_{\ell,m \in I_{\e,n}} \frac{|M_m-M_\ell|}{n^{1/4+\e/2+\e'}}
\leq  \max_{\ell,m \in I_{\e,n_k}\cup I_{\e,n_{k+1}}} \frac{|E_\w[T_m] - E_\w[T_\ell] - \frac{m-\ell}{\vp}|}{n_k^{1/4+\e/2+\e'}},
\]
and using \eqref{Mnfmdnk} and the fact that $n_{k+1}/n_k \to 1$ as $k\to \infty$ the right hand side vanishes almost surely as $k\to\infty$. Thus, we have shown that 
\[
 \lim_{n\ra\infty} \max_{k,\ell \in I_{\e,n}} \frac{ \left| E_\w[T_k]  - E_\w[T_\ell]-\frac{k-\ell}{v} \right| }{n^{1/4+\e/2+\e'}} = 0, \quad P\text{-a.s.,}\quad \text{for any } \e' > \frac{1/2-\e}{\k}. 
\]
Note that by taking $\e'$ arbitrarily close to $\frac{1/2-\e}{\k}$ this is equivalent to the statement \eqref{fmdas} we are trying to prove. 
\end{proof}

\begin{proof}[Proof of Proposition \ref{prop:qVmd1}]
Consider the martingale $\{L_n\}_{n\geq 0}$ defined by $L_0 = 0$ and 
\[
 L_n 
= \Var_\w(T_n) - \sum_{k=0}^{n-1} E\left[ V_k \, | \, \mathcal{F}_{k-1} \right] 
= \sum_{k=0}^{n-1} \left(  V_k - E\left[ V_k \, | \, \mathcal{F}_{k-1} \right] \right).
\]
It follows from Lemma \ref{lem:taumtail} that $E[|V_0|^p] \leq E[|E_\w[\tau_1^2]|^p] < \infty$ for any $p < \k/2$, and thus Lemma \ref{lem:Mnmd} implies that 
\begin{equation}\label{Lnmd}
 \lim_{n\ra\infty} \frac{L_n}{n^{\frac{2}{4\wedge\k}+\e}} = 0, \quad P\text{-a.s.}, \quad \text{for any } \e>0. 
\end{equation}
To compare $L_n$ to $\Var_\w(T_n) - \s^2 n$ we need to give a different representation of $L_n$. 
To this end, it follows from the recursive formula for the quenched variance in \eqref{Vkrec} that 
\[
 E[V_k \, | \, \mathcal{F}_{k-1} ] = 
E\left[ (\rho_k+\rho_k^2)\left( 1 + \mu_{k-1} \right)^2 + \rho_{k} V_{k-1} \, \Bigl| \, \mathcal{F}_{k-1} \right]
= (r_1+r_2)(1+\mu_{k-1})^2 + r_1 V_{k-1},  
\]
and thus
\begin{align}
L_n  
&= \Var_\w(T_n) - \sum_{k=0}^{n-1} \left\{ (r_1+r_2)\left( 1+\mu_{k-1} \right)^2 + r_1 V_{k-1} \right\} \nonumber \\
&= \Var_\w(T_n) - (r_1+r_2)\left\{ n + 2 \sum_{k=-1}^{n-2} \mu_k + \sum_{k=-1}^{n-2} \mu_k^2 \right\} - r_1 \sum_{k=-1}^{n-2} V_k \nonumber \\
&= (1-r_1) \Var_\w(T_n) - (r_1+r_2)\left\{ n + 2 E_\w[T_n] + \sum_{k=0}^{n-1} \mu_k^2 \right\} \label{Vndec1} \\
&\qquad - (r_1+r_2)( 2\mu_{n-1}+\mu_{n-1}^2-2\mu_{-1}-\mu_{-1}^2 ) + r_1( V_{n-1}-V_{-1} ) \nonumber 
\end{align}
To further simplify this, note that it follows from \eqref{mumoments} and \eqref{Vmean} that 
\begin{align*}
&(1-r_1) \s^2 - (r_1+r_2)\left( 1 + 2E[\mu_0] + E[\mu_0^2] \right) \\
&=  \frac{ 4(r_1+r_2)(1 + r_1)}{(1-r_1)(1-r_2)} - (r_1+r_2)\left( 1 + \frac{2(1+r_1)}{1-r_1} + \frac{1+3r_1+3r_2+r_1r_2}{(1-r_1)(1-r_2)} \right) \\
&= \frac{ 4(r_1+r_2)(1 + r_1)}{(1-r_1)(1-r_2)} - \frac{ 4(r_1+r_2)(1 + r_1)}{(1-r_1)(1-r_2)} = 0. 
\end{align*}
Therefore, we have that 
\begin{align*}
 L_n
&= (1-r_1)\left( \Var_\w(T_n) -\s^2 \right) - (r_1+r_2)\left\{ 2\left( E_\w[T_n]-\frac{n}{\vp} \right) + \sum_{k=0}^{n-1} \left(\mu_k^2 - E[\mu_0^2] \right) \right\} \\
&\qquad - (r_1+r_2)( 2\mu_{n-1}+\mu_{n-1}^2-2\mu_{-1}-\mu_{-1}^2 ) + r_1( V_{n-1}-V_{-1} ) \nonumber 
\end{align*}
From this representation of $L_n$, by \eqref{Lnmd} and Lemma \ref{lem:qETmd} we see that to finish the proof of Proposition \ref{prop:qVmd1} it is enough to show that 
\begin{equation}\label{muk2md}
 \lim_{n\ra\infty} \frac{ \sum_{k=0}^{n-1} \left(\mu_k^2 - E[\mu_0^2] \right) }{ n^{\frac{2}{4\wedge \k}+\e}} = 0, \quad P\text{-a.s.}
\end{equation}
and 
\begin{equation}\label{taun2}
  \lim_{n\ra\infty} \frac{ \mu_{n-1}^2 + V_{n-1} }{ n^{\frac{2}{4\wedge \k}+\e}} =  \lim_{n\ra\infty} \frac{ E_{\th^{n-1}\w}[\tau_1^2] }{ n^{\frac{2}{4\wedge \k}+\e}} = 0, \quad P\text{-a.s.}
\end{equation}
To prove \eqref{taun2}, note that it follows from Lemma \ref{lem:taumtail} that 
\[
 P\left( E_{\th^{n-1}\w}[\tau_1^2] \geq \d \, n^{\frac{2}{4\wedge\k}+\e} \right) = \bigo( n^{-\frac{\k}{4\wedge\k} - \frac{\e\k}{4} } ) = \bigo( n^{-1-\frac{\e\k}{4}}),
\]
and then \eqref{taun2} follows from the Borel-Cantelli Lemma. 

It remains only to prove \eqref{muk2md}, and to do this we will use another martingale. Define $H_n=0$ and 
\[
 H_n = \sum_{k=0}^{n-1} \left\{ \mu_k^2 - E[\mu_k^2 \, | \, \mathcal{F}_{k-1} ] \right\}, \quad n\geq 1. 
\]
Note that Lemmas \ref{lem:taumtail} and \ref{lem:Mnmd} imply that for any $\e>0$, 
\begin{equation}\label{Hnmd}
 \lim_{n\ra\infty} \frac{H_n}{n^{\frac{2}{4\wedge \k}+ \e}} = 0, \quad P\text{-a.s.}
\end{equation}
To use this to prove \eqref{muk2md} we need to give a different representation of $H_n$. 
Using the recursive formula for $\mu_k$ in \eqref{mukrec} it follows that
\begin{align*}
 E[\mu_k^2 \, | \, \mathcal{F}_{k-1} ] 
&= E\left[ (1+\rho_k)^2 + 2\rho_k(1+\rho_k) \mu_{k-1} + \rho_k^2 \mu_{k-1}^2 \, | \, \mathcal{F}_{k-1} \right] \\
&= 1+2r_1+r_2 + 2(r_1+r_2) \mu_{k-1} + r_2 \mu_{k-1}^2. 
\end{align*}
Consequently, the martingale $H_n$ can be re-written as 
\begin{align*}
 H_n 
&= \sum_{k=0}^{n-1} \mu_k^2 - (1+2r_1+r_2)n - 2(r_1+r_2)\sum_{k=-1}^{n-2} \mu_k - r_2 \sum_{k=-1}^{n-2} \mu_k^2 \\
&= (1-r_2) \sum_{k=0}^{n-1} \mu_k^2 - (1+2r_1+r_2)n - 2(r_1+r_2) E_\w[T_n] \\
&\qquad + 2(r_1+r_2)( \mu_{n-1} - \mu_{-1} ) + r_2( \mu_{n-1}^2 - \mu_{-1}^2 )
\end{align*}
Since the explicit formulas for $E[\mu_0]$ and $E[\mu_0^2]$ in \eqref{mumoments} imply that 
\begin{align*}
(1-r_2) E[\mu_0^2] - 2(r_1+r_2) E[\mu_0] 
&= \frac{1+3r_1+3r_2+r_1r_2}{1-r_1} - 2(r_1+r_2) \frac{1+r_1}{1-r_1} \\
&= \frac{1+r_1+r_2-2r_1^2 - r_1r_2}{1-r_1} \\
&= 1+2r_1+r_2,
\end{align*}
we can further simplify the expression for $H_n$ as 
\begin{align}
 H_n 
&= (1-r_2) \sum_{k=0}^{n-1} ( \mu_k^2 - E[\mu_0^2] ) - 2(r_1+r_2) \left( E_\w[T_n] - \frac{n}{\vp} \right) \label{Hndec} \\
&\qquad + 2(r_1+r_2)( \mu_{n-1} - \mu_{-1} ) + r_2( \mu_{n-1}^2 - \mu_{-1}^2 ) \nonumber 
\end{align}
An argument similar to the proof of \eqref{taun2} shows that $\lim_{n\ra\infty} \frac{\mu_{n-1}^2}{n^{\frac{2}{4\wedge\k}+\e}} = 0$, $P$-a.s, and thus the proof of \eqref{muk2md} follows from applying  \eqref{Hnmd} and Lemma \ref{lem:qETmd} to \eqref{Hndec}.
\end{proof}

\section{Quenched CLT rates of convergence for hitting times}\label{sec:Trates}

Since the hitting times $T_n= \sum_{k=1}^n \tau_k$ are the sum of random variables that are independent under the quenched measure, a key element in our proof of Theorems \ref{th:BETn} and \ref{th:BETnds} will be the following generalization of the Berry-Esseen estimates. 

\begin{thm}[Theorem V.3.6 in \cite{petrovRW}]\label{th:petrov}
Let $S_n = \sum_{k=1}^n \xi_i$ be the sum of independent zero mean random variable with finite variance. 
For any $\d \in (0,1]$ there exists a universal constant $A_\delta>0$ such that 
\[
 \sup_x \left| P\left( \frac{S_n}{\sqrt{\Var(S_n)}} \leq x \right) - \Phi(x) \right| \leq \frac{A_\delta}{\Var(S_n)^{1+\frac{\d}{2}}} \sum_{k=1}^n E\left[|\xi_i|^{2+\d}\right]. 
\]
\end{thm}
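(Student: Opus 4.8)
This is the classical non-identically-distributed Berry--Esseen (Esseen) inequality, and the plan is to prove it by the characteristic-function method. First I would normalize: replacing each $\xi_k$ by $\xi_k/\sqrt{\Var(S_n)}$, we may assume $\Var(S_n)=1$, and writing $\sigma_k^2=\Var(\xi_k)$, $\beta_k=E[|\xi_k|^{2+\d}]$ and $L=\sum_{k=1}^n\beta_k$, the claim becomes $\sup_x|F_n(x)-\Phi(x)|\le A_\d L$, where $F_n$ is the distribution function of $S_n$. Since the left-hand side is always at most $1$, it suffices to treat $L\le c_0$ for a small constant $c_0=c_0(\d)$. Two elementary inputs drive the argument: Lyapunov's inequality $\sigma_k^{2+\d}\le\beta_k\le L$, which gives $\sigma_k^2\le L^{2/(2+\d)}$ for every $k$; and the pointwise estimate $|e^{iy}-1-iy+\tfrac12 y^2|\le\min(|y|^2,\tfrac16|y|^3)\le|y|^{2+\d}$ for $\d\in(0,1]$, which applied to $y=t\xi_k$ yields, for every real $t$,
\[
 \varphi_k(t):=E[e^{it\xi_k}]=1-\tfrac12 t^2\sigma_k^2+\theta_k(t),\qquad |\theta_k(t)|\le\min\bigl(t^2\sigma_k^2,\ |t|^{2+\d}\beta_k\bigr).
\]

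Next I would invoke the classical Esseen smoothing inequality (see \cite{petrovRW}): for any $T>0$,
\[
 \sup_x|F_n(x)-\Phi(x)|\le\frac1\pi\int_{-T}^{T}\frac{|\varphi_n(t)-e^{-t^2/2}|}{|t|}\,dt+\frac{24}{\pi\sqrt{2\pi}\,T},\qquad\varphi_n(t)=\prod_{k=1}^n\varphi_k(t),
\]
and take $T=(8L)^{-1/\d}$, so that $1/T=(8L)^{1/\d}\le 8^{1/\d}L$ (here $\d\le1$ and $L\le1$ are used) and the boundary term is $\bigo(L)$. The Fourier integral I would split at $T_1:=\epsilon_0 L^{-1/(2+\d)}$ for a small $\epsilon_0=\epsilon_0(\d)$, handling the inner and outer ranges differently.

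On the inner range $|t|\le T_1$ the bounds above give $\bigl|\tfrac12 t^2\sigma_k^2-\theta_k(t)\bigr|\le\tfrac14$ for every $k$, so I can expand $\log\varphi_k(t)$; summing over $k$ and using $\sum_k\sigma_k^2=1$, $\sum_k\beta_k=L$, $\sum_k\sigma_k^4\le L^{4/(2+\d)}$ and $\sum_k\beta_k^2\le L^2$, the choice of $T_1$ forces every error term below a constant multiple of $|t|^{2+\d}L$, hence $|\log\varphi_n(t)+\tfrac12 t^2|\le 2|t|^{2+\d}L$ and therefore $|\varphi_n(t)-e^{-t^2/2}|\le 2|t|^{2+\d}L\,e^{-t^2/4}$; dividing by $|t|$ and integrating over $\R$ gives an $\bigo(L)$ contribution. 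On the outer range $T_1\le|t|\le T$ I would discard closeness to the Gaussian and only use decay of $|\varphi_n|$: with $R_k:=(\sigma_k^2/(4\beta_k))^{1/\d}$ one checks, again via Lyapunov, that $|\varphi_k(t)|\le 1-\tfrac14 t^2\sigma_k^2\le e^{-t^2\sigma_k^2/4}$ whenever $|t|\le R_k$ (while $|\varphi_k(t)|\le1$ always), whereas the indices with $R_k<|t|$ satisfy $\sigma_k^2<4\beta_k|t|^\d$ and hence carry total variance at most $4|t|^\d L\le 4T^\d L=\tfrac12$ on $|t|\le T$; consequently $|\varphi_n(t)|\le e^{-t^2/8}$ throughout $|t|\le T$, and since $T_1\to\infty$ as $L\to0$ the outer part of the integral is smaller than every power of $L$. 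Adding the three contributions gives $\sup_x|F_n(x)-\Phi(x)|\le A_\d L$ for $L\le c_0$, and $A_\d$ may be enlarged to cover the trivial case $L>c_0$.

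The step I expect to be the genuine obstacle is the outer-range estimate. A single summand can carry variance as large as $L^{2/(2+\d)}$, well above the ``typical'' value $1/n$, so its characteristic function is neither close to $1$ nor small throughout $|t|\le T$; one must isolate such summands through the thresholds $R_k$ and argue that the variance they account for never exceeds half the total, which is precisely what pins down the admissible choice $T=(8L)^{-1/\d}$ and, through $\d\le1$, the final single power of $L$. Everything else is careful bookkeeping, organized so that on the inner range every error term is $\lesssim|t|^{2+\d}L$ with a Gaussian profile, which makes $\int dt/|t|$ converge and deliver exactly the factor $L$.
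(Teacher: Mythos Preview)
The paper does not prove this theorem at all: it is quoted verbatim as Theorem~V.3.6 of Petrov's book \cite{petrovRW} and used as a black box in the proof of Theorem~\ref{th:BETn}. So there is no ``paper's own proof'' to compare against; your sketch is supplying what the authors deliberately outsourced to a reference.

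That said, your outline is the standard characteristic-function proof (Esseen's smoothing inequality, Taylor expansion of $\log\varphi_k$ on an inner range, and a variance-splitting argument on the outer range) and is essentially the argument Petrov gives. The key steps are correctly identified: the interpolation bound $|e^{iy}-1-iy+\tfrac12 y^2|\lesssim |y|^{2+\d}$, the choice $T\asymp L^{-1/\d}$ so that the smoothing remainder is $\bigo(L)$, and the outer-range observation that indices with $R_k<|t|$ carry total variance at most $4|t|^\d L\le\tfrac12$, leaving enough good factors to force $|\varphi_n(t)|\le e^{-t^2/8}$. The bookkeeping you flag (e.g.\ $\sum_k\sigma_k^4\le L^{4/(2+\d)}$ via Lyapunov) is right, and the reduction to $L\le c_0$ is the usual device. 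Nothing here is wrong; it simply goes beyond what the paper attempts.
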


\begin{proof}[Proof of Theorem \ref{th:BETn}]
Since under the quenched measure $T_n - E_\w[T_n] = \sum_{k=1}^n (\tau_k - E_\w[\tau_k])$ is the sum of independent zero mean random variables, 
it follows immediately from Theorem \ref{th:petrov} (with $\d=1$) that 
\begin{equation}\label{bound1}
 \sup_x \left| \overline{F}_{n,\w}(x) - \Phi(x) \right| \leq \frac{A_1}{\Var_\w(T_n)^{3/2}} \sum_{k=1}^n E_\w\left[ \left| \tau_k - E_\w[\tau_k] \right|^3 \right].
\end{equation}
Since $\Var_\w(T_n)/n \to \s^2$ almost surely as $n\to\infty$ we need only to consider the asymptotics of the last sum on the right. The analysis is different in the cases $\k>3$ and $\k \in (2,3]$. 

\textbf{Case I: $\k>3$.}
In this case it follows from Lemma \ref{lem:taumtail} that 
$\E[|\tau_1 - E_\w[\tau_1]|^3] < \infty$. 
Therefore, Birkhoff's Ergodic Theorem implies that 
\[
 \lim_{n\to\infty} \frac{1}{n} \sum_{k=1}^n E_\w\left[ \left| \tau_k - E_\w[\tau_k] \right|^3 \right]  = \E[|\tau_1 - E_\w[\tau_1]|^3].
\]
Applying this to \eqref{bound1} we obtain that 
\begin{align*}
 \limsup_{n\to\infty} \sqrt{n} \sup_x \left| \overline{F}_{n,\w}(x) - \Phi(x) \right| 
&\leq 
\lim_{n\to\infty} \frac{A_1 \sqrt{n}}{\Var_\w(T_n)^{3/2}} \sum_{k=1}^n E_\w\left[ \left| \tau_k - E_\w[\tau_k] \right|^3 \right] \\
&= \frac{A_1 \E[|\tau_1 - E_\w[\tau_1]|^3]}{\s^3}. 
\end{align*}

\textbf{Case II: $\k \in (2,3]$.} It follows from Lemma \ref{lem:taumtail} that for any $p < \k/3$, 
\begin{align*}
 E\left[ \left( E_\w[ | \tau_1 - E_\w[\tau_1] |^3 ] \right)^p \right] 
&\leq
4^p E\left[ \left( E_\w[\tau_1^3] + (E_\w[\tau_1])^3 \right)^p \right] \\
&\leq 4^p 2^{p-1} E\left[ E_\w[\tau_1^3]^p + E_\w[\tau_1]^{3p} \right] < \infty. 
\end{align*}
Since the quenched expectations $E_\w\left[ \left| \tau_k - E_\w[\tau_k] \right|^3 \right]$ are an ergodic sequence in $k$, it follows that if $p < \frac{\k}{3} \leq 1$ then 
\begin{align*}
\limsup_{n\to\infty} \frac{1}{n^{1/p}} \sum_{k=1}^n E_\w\left[ \left| \tau_k - E_\w[\tau_k] \right|^3 \right] 
&= \limsup_{n\to\infty} \left\{ \frac{1}{n} \left( \sum_{k=1}^n E_\w\left[ \left| \tau_k - E_\w[\tau_k] \right|^3 \right] \right)^p  \right\}^{1/p} \\
&\leq \lim_{n\to\infty} \left\{ \frac{1}{n}  \sum_{k=1}^n \left( E_\w\left[ \left| \tau_k - E_\w[\tau_k] \right|^3 \right] \right)^p  \right\}^{1/p} \\
&= \left\{ E\left[ \left( E_\w\left[ \left| \tau_1 - E_\w[\tau_1] \right|^3 \right] \right)^p \right] \right\}^{1/p}< \infty, \quad P-a.s.
\end{align*}
By taking $p$ arbitrarily close to $\k/3$ we can therefore conclude that
\[
 \lim_{n\to\infty} \frac{1}{n^{\frac{3}{\k}+\e}} \sum_{k=1}^{n} E_\w\left[ \left| \tau_k - E_\w[\tau_k] \right|^3 \right] = 0, \quad P-a.s.
\]
Applying this to \eqref{bound1} we obtain that for any $\e>0$, 
\begin{align*}
 \limsup_{n\to\infty} n^{\frac{3}{2}-\frac{3}{\k}-\e} \sup_x \left| \overline{F}_{n,\w}(x) - \Phi(x) \right| 
&\leq A_1 \left( \frac{n}{\Var_\w(T_n)} \right)^{3/2} \frac{1}{n^{\frac{3}{\k}+\e}} \sum_{k=1}^n E_\w\left[ \left| \tau_k - E_\w[\tau_k] \right|^3 \right] = 0, \quad P\text{-a.s.}
\end{align*}
\end{proof}

\begin{rem}
In the case of $\k \in (2,3]$
one might wonder if better rates of convergence could be obtained by applying Theorem \ref{th:petrov} with $2+\d < \k$. However, it's easy to see that this only gives $n^{\frac{\k}{2}-1-\e} \|\overline{F}_{n,\w} - \Phi \|_\infty \to 0$ for any $\e>0$, and since $\frac{\k}{2}-1 < \frac{3}{2}-\frac{3}{\k}$ when $\k \in (2,3)$ the bounds in the statement of Theorem \ref{th:BETn} are better.
\end{rem}

\begin{proof}[Proof of Theorem \ref{th:BETnds}]
Since 
\[
 F_{n,\w}(x) 
= P_\w\left(  \frac{T_n - E_\w[T_n]}{\sqrt{\Var_\w(T_n)}} \leq x \sqrt{ \frac{\s^2 n }{\Var_\w(T_n)}} \right) 
= \overline{F}_{n,\w}\left( x \sqrt{ \frac{\s^2 n }{\Var_\w(T_n)}} \right), 
\]
we note that 
\begin{equation}
 \sup_x \left| F_{n,\w}(x) - \Phi(x) \right| 
\leq \sup_x | \overline{F}_{n,\w}(x) - \Phi(x) | + \sup_x \left| \Phi\left( x \sqrt{ \frac{\s^2 n }{\Var_\w(T_n)}} \right) - \Phi(x) \right|
\end{equation}
The first term on the right can be controlled by Theorem \ref{th:BETn},
while for the second term on the right we note (see for instance \cite[Section V.3, equation (3.3)]{petrovRW}) that 
\[
 \sup_x \left| \Phi(x) - \Phi(a x) \right| \leq \begin{cases} \frac{1}{\sqrt{2\pi e}} \frac{1-a}{a} & \text{if } a\in (0,1) \\ \frac{1}{\sqrt{2\pi e}} (a-1) & \text{if } a \geq 1. \end{cases}
\]
It follows from Proposition \ref{prop:qVmd1} that for any $\e>0$,  $P$-a.e.\ environment $\w$,
\[
 \sqrt{ \frac{\s^2 n }{\Var_\w(T_n)}} = 1 + o\left( n^{\frac{2}{4\wedge\k}+\e -1} \right), \quad \text{for $P$-a.e.\ environment $\w$,}
\]
and therefore
\[
 \lim_{n\to\infty} n^{1-\frac{2}{4\wedge\k} - \e} \sup_x \left| \Phi\left( x \sqrt{ \frac{\s^2 n }{\Var_\w(T_n)}} \right) - \Phi(x) \right| = 0, \quad P\text{-a.s.}
\]
Since in all cases the rate of decay of the first term on the right in \eqref{bound1} given by Theorem \ref{th:BETn} decays faster than $n^{-1+\frac{2}{4\wedge\k}+\e}$ this completes the proof of Theorem \ref{th:BETnds}. 
\end{proof}

\section{Quenched CLT rates of convergence for the walk}\label{sec:Xrates}

As noted in the introduction, we will obtain rates of convergence for the quenched CLT for $X_n$ from the rates of the quenched CLT for $T_n$ in Theorem \ref{th:BETn}. 
The transfer of limiting distributions from hitting times to the position of the walk hinges on the fact that $P_\w(T_k > n) = P_\w(X_n^* < k)$ where $X_n^* = \max_{k\leq n} X_k$ is the running maximum of the walk up to time $n$. 
In preparation for the proof of Theorem \ref{th:BEXn} we will first prove the following Lemma which will allow us to compare the distribution of $X_n^*$ and $X_n$. 

\begin{lem}\label{XnXns}
 If $\k>0$ then there exists a constant $B>0$ such that $P_\w( X_n^* - X_n \geq B \log n) \leq \frac{1}{\sqrt{n}}$ for $P$-a.e.\ environment $\w$ and for all $n$ sufficiently large. 
\end{lem}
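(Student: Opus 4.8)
The plan is to estimate $P_\w(X_n^* - X_n \geq B\log n)$ by conditioning on the value of the running maximum $X_n^*$ and using the strong Markov property at the last time the walk is at its running maximum before time $n$. Concretely, if $X_n^* = k$ and $X_n < k$, then the walk must have been at site $k$ at some time $m \le n$ and then stayed strictly below $k$ for the remaining $n - m$ steps; in particular, it must have traveled from $k$ down to $X_n \le k - B\log n$ without ever returning to $k$. So $P_\w(X_n^* - X_n \ge B\log n)$ is bounded by a sum over the possible maxima $k$ of (the probability the walk reaches $k$) times (the quenched probability, starting from $k$ with a reflection at $k$, of being at distance at least $B\log n$ to the left). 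Since probabilities of reaching $k$ sum to at most something like $n$ over the relevant range (the walk is transient, so $X_n^* \le$ some linear-in-$n$ bound with overwhelming probability, and we can afford a crude union bound here), it suffices to show that for each site $k$ in the relevant range,
\[
P_\w^k\bigl( \text{walk kills at } k \text{ stays} \ge B\log n \text{ to the left of } k \bigr) \le n^{-3/2}
\]
uniformly, $P$-a.s., for $n$ large.

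The key computation is therefore a one-sided exit/left-displacement estimate for the walk in the environment with an absorbing or reflecting barrier at $k$. First I would recall the standard hitting-probability formulas for one-dimensional RWRE: for $j < k$, the quenched probability that the walk started at $k$ (reflected at $k$, or equivalently conditioned never to go right of $k$) ever reaches $j$ before returning to $k$ is a ratio of sums of products $\Pi_{i,\ell} = \prod_{x=i}^\ell \rho_x$, and more crudely the probability of ever reaching $j$ from $k$ while staying $\le k$ is bounded above by something of the form $\bigl(\sum_{x=j}^{k-1}\Pi_{x+1,\,k-1}\bigr)^{-1} \le$ a product-type quantity that decays like $\prod_{x=j+1}^{k}\rho_x^{-1}$ up to polynomial corrections. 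Thus $P_\w^k(\text{reach } k - \lfloor B\log n\rfloor \text{ while} \le k)$ is bounded, $P$-a.s.\ for large $n$, by $C\exp\bigl(-B\log n \cdot (\text{something related to } -E[\log\rho_0] + o(1))\bigr)$ by the law of large numbers applied to $\sum \log\rho_x$ over an interval of length $B\log n$ (using $E[\log\rho_0] < 0$, which holds by Assumption \ref{asm:k2}). Choosing $B$ large enough that $B\cdot(-E[\log\rho_0])$ exceeds, say, $2$ makes this at most $n^{-2}$, comfortably beating the $n^{-3/2}$ target and leaving room for the union bound over $k$.

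I would organize the argument as follows. First, reduce to the event that $X_n^* \le An$ for a suitable constant $A$ (on the complement, transience gives a bound much smaller than $n^{-1/2}$, e.g.\ by the hitting-time moment bounds of Lemma \ref{lem:taumtail}, or just by ballistic large deviations). Second, on $\{X_n^* \le An\}$, union-bound over $k \in \{0,1,\dots,\lfloor An\rfloor\}$: for the walk to have $X_n^* = k$ and $X_n^* - X_n \ge B\log n$, it must reach $k$ and then, from $k$, travel at least $B\log n$ to the left without returning to $k$ — and the latter has quenched probability at most the exit estimate above. Third, bound each such term by $n^{-5/2}$ (absorbing the reach-probability $\le 1$ crudely), so the union over $\le An+1$ values of $k$ gives $\le C n^{-3/2} \le n^{-1/2}$ for $n$ large. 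The uniform-in-$k$, $P$-a.s.\ control of the exit estimate is handled by noting $\max_{0\le k\le An}$ over intervals of length $\sim B\log n$ of $\frac1{B\log n}\sum_{x} \log\rho_x$ converges to $E[\log\rho_0] < 0$ uniformly, $P$-a.s., which follows from a standard maximal/Borel--Cantelli argument since there are only polynomially many such intervals and each deviation probability is subexponentially small in $\log n$, hence summable after choosing constants appropriately.

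The main obstacle I anticipate is making the uniform-in-$k$ almost-sure statement clean: one needs the left-displacement estimate to hold simultaneously for all starting sites $k$ up to order $n$, with only logarithmically many relevant $\rho_x$'s involved for each $k$. This requires either a slightly careful large-deviation bound for sums of $\log\rho_x$ of length $\sim\log n$ (which are only moderately concentrated, so the deviation probability is merely polynomially small, like $n^{-c}$, not exponentially small) combined with the fact that we only need it to beat $n^{-1/2}$ after a union bound of size $n$ — so we need the per-site exit probability to be $\le n^{-3/2-\delta}$, which forces $B$ to be chosen large enough that $B(-E[\log\rho_0]) > \frac32$ plus slack. Managing these constants, and being careful that the "hitting $j$ before returning to $k$" probability formula genuinely gives the exponential-in-(number of steps) decay rather than something weaker, is where the real work lies; everything else is bookkeeping.
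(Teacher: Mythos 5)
Your approach is genuinely different from the paper's and can be made to work, but the paper's route is much shorter. The paper never works with the quenched hitting probability directly: it simply cites the \emph{annealed} exponential tail bound $\P(T_{-m}<\infty)\le C_1e^{-C_2m}$ from Gantert--Shi \cite{gsMVSS}, unions over the at most $n$ possible values of the running maximum using the strong Markov property at $T_k$ (a genuine stopping time, unlike the ``last visit to the running maximum'' you propose to condition on), getting $\P(X_n^*-X_n\ge m)\le C_1ne^{-C_2m}$, and then converts this annealed bound into the quenched almost-sure statement by Markov's inequality in $\w$ plus Borel--Cantelli, choosing $B$ so that $C_1n^{3/2-C_2B}$ is summable. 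Your proposal instead re-derives the exponential decay from scratch at the quenched level via the gambler's-ruin product formula and a large-deviation bound on $\sum\log\rho_x$ over $\sim n$ windows of length $B\log n$; this is essentially re-proving the \cite{gsMVSS} estimate in quenched form and forces you to pay for uniformity in $k$. That cost is real but manageable: you need an exponential-Markov bound using $E[\rho_0^p]<\infty$ for $p\in(0,\k)$ (the assumption $E[\log\rho_0]<0$ alone does not yield the deviation rate, though $\k>0$ does supply the needed one-sided exponential moment), plus uniform polynomial control of the prefactor $\sum_{z\ge0}\Pi_{k+1,k+z}$, and then a further Borel--Cantelli over $n$; since $B$ is free, all constants can be made to fall into place, as you correctly anticipate. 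What the paper's Markov-then-Borel--Cantelli trick buys is that none of this uniform-in-$k$ bookkeeping is needed once an annealed bound is in hand. Two small slips in your write-up: the quantity to estimate for each $k$ is simply $P_\w^k(T_{k-m}<\infty)$ with no barrier --- with a reflection at $k$ the walk is recurrent on the half-line and eventually visits every site to the left, so that probability is $1$ --- and the restriction ``without ever returning to $k$'' is not needed for the union-over-$k$ bound via the first-hitting decomposition (though you would need it, together with an extra sum over times, if you insist on conditioning on the last visit to $k$, which is not a stopping time).
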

\begin{proof}
It was shown in \cite{gsMVSS} that if $\k>0$ then $\P(T_{-m} < \infty) \leq C_1 e^{-C_2 m}$ for some constants $C_1,C_2 > 0$ and all $m\geq 1$. 
It follows from this that 
\begin{align*}
 \P(X_n^* - X_n \geq m) 
&\leq \sum_{k=0}^{n-1} \P\left( \inf_{i > T_k} X_i \leq k-m \right) \\
&= \sum_{k=0}^{n-1} \P^k(T_{k-m} < \infty)
= n \P(T_{-m}<\infty) \leq C_1 n e^{-C_2 m}. 
\end{align*}
Therefore, by Chebychev's inequality we have 
\[
 P\left( P_\w( X_n^* - X_n \geq B \log n) > \frac{1}{\sqrt{n}} \right)
\leq \sqrt{n} \P( X_n^* - X_n \geq B \log n ) \leq C_1 n^{3/2} e^{-C_2 B \log n}.
\]
If $B > \frac{5}{2 C_2}$, then this bound is summable and the conclusion of the lemma follows from the Borel-Cantelli Lemma. 
\end{proof}

\begin{proof}[Proof of Theorem \ref{th:BEXn}]
Since the distribution function $\Phi(x)$ is continuous, rates of convergence for $G_{n,\w}$ are equivalent to rates of convergence for 
\[
 G_{n,\w}^\circ(x) = \lim_{\e\ra 0^+} G_{n,\w}(x+\e) = P_\w\left( \frac{X_n - n\vp + Z_n(\w)}{\s \vp^{3/2}\sqrt{n}} < x \right).
\]
Since it is more convenient for the proof, we will prove rates of convergence for $G_{n,\w}^\circ$. 
In fact, the strategy of the proof will be to first prove rates of convergence for 
\[
G_{n,\w}^*(x) = P_\w\left( \frac{X_n^* - n\vp + Z_n(\w)}{\s \vp^{3/2}\sqrt{n}} < x \right)
\]
and then use Lemma \ref{XnXns} to obtain corresponding rates of convergence for $G_{n,\w}^\circ$. 
Indeed, since 
\begin{align*}
& \left| G_{n,\w}^\circ(x) - \Phi(x)\right| \\
&\quad \leq \left| G_{n,\w}^\circ(x) - G_{n,\w}^*\left(x + \frac{B\log n}{\s \vp^{3/2} \sqrt{n}} \right) \right| + \sup_{y\in\R} \left| G_{n,\w}^*\left(y\right) - \Phi\left( y \right) \right| +  \left| \Phi\left( x + \frac{B\log n}{\s \vp^{3/2} \sqrt{n}} \right) - \Phi(x) \right| \\
&\quad \leq P_\w (X_n^* - X_n \geq B \log n) + \sup_{y\in\R} \left| G_{n,\w}^*(y) - \Phi(y) \right| +  \frac{B\log n}{\s \vp^{3/2} \sqrt{2 \pi n}} ,
\end{align*}
it follows from Lemma \ref{XnXns} that to prove the almost sure convergence rate of convergence in Theorem \ref{th:BEXn} we need only to show 
\begin{equation}\label{BETXns}
 \lim_{n\to\infty} n^{\frac{1}{4} - \frac{1}{2\k} - \e} \sup_{x\in\R} \left| G_{n,\w}^*(x) - \Phi(x) \right| = 0, \quad P\text{-a.s.}, \quad \text{for  any } \e>0.
\end{equation}

For the proof of \eqref{BETXns} we begin by noting that since $P_\w(X_n^* < k ) = P_\w(T_k > n)$ for any $n,k\geq 1$ that
\begin{equation}\label{XnsTn}
 G_{n,\w}^*(x) = P_\w\left( X_n^* < n\vp - Z_n(\w) + x \s \vp^{3/2} \sqrt{n} \right) = P_\w\left( T_{k(n,\w,x)} > n \right) 
\end{equation}
whenever 
\[
 k(n,\w,x) := \left\lceil n\vp - Z_n(\w) + x\s \vp^{3/2} \sqrt{n} \right\rceil \geq 1. 
\]
Throughout the remainder of our proof, we will fix an arbitrary $\e \in (0,1/2)$. 
Let $x_{n,\e}^-=x_{n,\e}^-(\w)$ and $x_{n,\e}^+=x_{n,\e}^+(\w)$ be such that $k(n,\w,x_{n,\e}^-) = \ceil{n\vp - n^{1/2+\e}}$ and $k(n,\w,x_{n,\e}^+) = \fl{n\vp + n^{1/2+\e}}$. 
We will use \eqref{XnsTn} and Theorem \ref{th:BETn} to control $|G_{n,\w}^*(x)-\Phi(x)|$ but our analysis will be different depending on whether or not $x \in [x_{n,\e}^-,x_{n,\e}^+]$.

\textbf{Case I: $x\in [x_{n,\e}^-,x_{n,\e}^+]$.} In this case, it follows from \eqref{XnsTn} that 
\begin{align}
\left| G_{n,\w}^*(x) - \Phi(x) \right| 
&= \left| P_\w\left( \frac{ T_{k(n,\w,x)} - E_\w[T_{k(n,\w,x)}] }{ \sqrt{\Var_\w(T_{k(n,\w,x)})} } > \frac{ n - E_\w[T_{k(n,\w,x)}] }{ \sqrt{\Var_\w(T_{k(n,\w,x)})} } \right) - \Phi(x) \right| \nonumber  \\
&= \left|\overline{F}_{k(n,\w,x),\w}\left( \frac{ n - E_\w[T_{k(n,\w,x)}] }{ \sqrt{\Var_\w(T_{k(n,\w,x)})} } \right) - \Phi(-x) \right| \nonumber \\
&\leq \sup_{t\in\R} \left| \overline{F}_{k(n,\w,x),\w}(t) - \Phi(t) \right| + \left| \Phi\left( \frac{ n - E_\w[T_{k(n,\w,x)}] }{ \sqrt{\Var_\w(T_{k(n,\w,x)})} } \right)  - \Phi(-x) \right| \nonumber \\ 
&\leq \sup_{|m -n\vp|\leq n^{1/2+\e}} \left\| \overline{F}_{m,\w} - \Phi \right\|_\infty + \frac{1}{\sqrt{2\pi}} \left| \frac{E_\w[T_{k(n,\w,x)}] - n }{ \sqrt{\Var_\w(T_{k(n,\w,x)})} } - x \right|. \label{GnPhi1}
\end{align}
The first term in \eqref{GnPhi1} can be controlled by Theorem \ref{th:BETn}.
For the second term in \eqref{GnPhi1}, note first of all that (recalling the definition of $Z_n(\w)$ from the statement of Theorem \ref{th:qclt})
\begin{align*}
 n &= E_\w[T_{\fl{n\vp}}] - \left( E_\w[T_{\fl{n\vp}}] - \frac{\fl{n\vp}}{\vp} \right) + \frac{n\vp-\fl{n\vp}}{\vp} \\
&= E_\w[T_{\fl{n\vp}}] - \frac{Z_n(\w)}{\vp} + \bigo(1),
\end{align*}
where here (and below) we will use $\bigo(1)$ to denote uniformly bounded error terms coming from integer rounding. 
Therefore, 
\begin{align*}
 E_\w[T_{k(n,\w,x)}] - n 
&= E_\w[T_{k(n,\w,x)}] - E_\w[T_{\fl{n\vp}}] + \frac{Z_n(\w)}{\vp} + \bigo(1) \\
&= \left( E_\w[T_{k(n,\w,x)}] - E_\w[T_{\fl{n\vp}}] - \frac{k(n,\w,x)-n\vp }{\vp}\right) + \frac{k(n,\w,x)-n\vp + Z_n(\w)}{\vp} + \bigo(1) \\
&= \left( E_\w[T_{k(n,\w,x)}] - E_\w[T_{\fl{n\vp}}] - \frac{k(n,\w,x)-n\vp }{\vp} \right) + x\s \sqrt{n\vp} + \bigo(1),
\end{align*}
where the last equality follows from the definition of $k(n,\w,x)$. 
Since $x \in [x_{n,\e}^-,x_{n,\e}^+]$ implies that $k(n,\w,x) \in I_{\e,n} = [n\vp-n^{1/2+\e},n\vp+n^{1/2+\e}]$ it follows from Proposition \ref{prop:qETfmd} that the first term in the last line is bounded (uniformly over $x\in[x_{n,\e}^-,x_{n,\e}^+]$) by something that is $o\left( n^{\frac{1}{4} + \frac{1}{2\k} + \e(\frac{1}{2}-\frac{1}{\k})+\e'} \right)$ for any $\e'>0$. 
Finally, we claim that $\Var_\w(T_{k(n,\w,x)})$ is asymptotically close to $\s^2 \vp n$ uniformly over $x \in [x_{n,\e}^-,x_{n,\e}^+]$. 
Indeed, since
$\Var_\w(T_{n\vp - n^{1/2+\e}}) \leq \Var_\w(T_{k(n,\w,x)}) \leq \Var_\w(T_{n\vp + n^{1/2+\e}})$ it follows from the fact that $\Var_\w(T_m)\sim \s^2 m$ that 
\[
 \lim_{n\to\infty} \sup_{x\in [x_{n,\e}^-,x_{n,\e}^+] } \left| \frac{\Var_\w(T_{k(n,\w,x)})}{\s^2 n \vp} - 1 \right| = 0, \quad P\text{-a.s.} 
\]
We have therefore shown that for any $\e'>0$, 
\begin{equation}\label{ETkas}
 \lim_{n\to\infty} n^{\frac{1}{4} - \frac{1}{2\k} - \e(\frac{1}{2}-\frac{1}{\k})-\e'} \sup_{x \in [x_{n,\e}^-,x_{n,\e}^+]}  \left| \frac{E_\w[T_{k(n,\w,x)}] - n }{  \sqrt{\Var_\w(T_{k(n,\w,x)})} } - x \right| = 0, \quad P\text{-a.s.}
\end{equation}
Since Theorem \ref{th:BETn} implies that the first term in \eqref{GnPhi1} decays strictly faster than $n^{-\frac{1}{4}+\frac{1}{2\k}}$, we can conclude that 
\begin{equation}\label{xsmall}
 \lim_{n\to\infty} n^{\frac{1}{4} - \frac{1}{2\k} - \e(\frac{1}{2}-\frac{1}{\k})-\e'} \sup_{x \in [x_{n,\e}^-,x_{n,\e}^+]} \left| G_{n,\w}^*(x) - \Phi(x) \right| = 0, \quad P\text{-a.s.} 
\end{equation}

\textbf{Case II: $x \notin [ x_{n,\e}^-, x_{n,\e}^+]$.} Since Lemma \ref{lem:qETmd} implies that $Z_n(\w)/n^{1/2+\e} \to 0$, it follows that for $n$ large enough (depending on $\w$) $x_{n,\e}^- < -n^{-\e/2}$ and $x_{n,\e}^+ > n^{\e/2}$. 
Therefore, by the monotonicity of the distribution functions we have 
\begin{align}
 \sup_{x < x_{n,\e}^-} \left| G_{n,\w}^*(x) - \Phi(x) \right| 
&\leq G_{n,\w}^*(x_{n,\e}^-) + \Phi(x_{n,\e}^-) \nonumber \\
&\leq \left| G_{n,\w}^*(x_{n,\e}^-) - \Phi(x_{n,\e}^-) \right| + 2 \Phi(x_{n,\e}^-) \nonumber \\
&\leq \left| G_{n,\w}^*(x_{n,\e}^-) - \Phi(x_{n,\e}^-) \right| + 2 \Phi(-n^{\e/2}), \label{negx}
\end{align}
and similarly
\begin{align}
 \sup_{x > x_{n,\e}^-} \left| G_{n,\w}^*(x) - \Phi(x) \right| 
&= \sup_{x > x_{n,\e}^-} \left|\left(1- G_{n,\w}^*(x)\right) - (1-\Phi(x)) \right| \nonumber \\
&\leq  1-G_{n,\w}^*(x_{n,\e}^+) + 1-\Phi(x_{n,\e}^+) \nonumber \\
&\leq \left| G_{n,\w}^*(x_{n,\e}^+) - \Phi(x_{n,\e}^+)   \right| + 2 ( 1-\Phi(n^{\e/2}) ). \label{posx}
\end{align}
Since $\Phi(-n^{\e/2}) = 1-\Phi(n^{\e/2})$ decays faster than any polynomial in $n$, applying \eqref{xsmall} to \eqref{negx} and \eqref{posx} we obtain that 
\[
 \lim_{n\to\infty} n^{\frac{1}{4} - \frac{1}{2\k} - \e(\frac{1}{2}-\frac{1}{\k})-\e'} \sup_{x \in\R} \left| G_{n,\w}^*(x) - \Phi(x) \right| = 0, \quad P\text{-a.s.} 
\]
Finally, note that since $\e,\e'>0$ were arbitrary this completes the proof of the almost sure rate of convergence in Theorem \ref{th:BEXn}. 

The proof of the weaker in probability rates of convergence for $G_n$ in \eqref{BEXnip1} and \eqref{BEXnip2} are almost the same as the above proof of the almost sure convergence rates. The only difference is that instead of \eqref{ETkas}, the convergence in probability statement in Proposition \ref{prop:qETfmd} gives that for any $\e'>0$
\[
  \lim_{n\to\infty} n^{\frac{1}{4} - \frac{\e}{2} - \e'} \sup_{x \in [x_{n,\e}^-,x_{n,\e}^+]}  \left| \frac{E_\w[T_{k(n,\w,x)}] - n }{ \sqrt{\Var_\w(T_{k(n,\w,x)})} } - x \right| = 0, \quad \text{in $P$-probability.}
\]
The rest of the proof is essentially the same with the exception that when $\k \in (2,\frac{12}{5})$ and $\e>0$ is sufficiently small the dominant term in \eqref{GnPhi1} is the first term which by Theorem \ref{th:BETn} is $o(n^{-\frac{3}{2}+\frac{3}{\k} + \e''})$ for any $\e''>0$. 
\end{proof}


\end{document}